\documentclass[11pt]{article}
\usepackage[latin1]{inputenc}
\usepackage[english]{babel}
\usepackage{amssymb,latexsym}
\usepackage{amsmath,amsthm,amstext,amsfonts}%usepackage para numera\c{c}\~{a}o das f\'{o}rmulas
\usepackage[normalem]{ulem}
\usepackage{cite}

\usepackage{graphicx}
\graphicspath{{./figuras}}

\usepackage{auto-pst-pdf}
\usepackage{pstricks}
\usepackage{pst-plot,pst-node,pstricks-add}

\usepackage{graphicx}
\graphicspath{{./figuras}}

\usepackage{verbatim,fancyvrb}

\usepackage{physics}

\newtheorem{defn}{Definition}[section]
\newtheorem{thm}{Theorem}[section]
\newtheorem{prop}{Proposition}[section]
\newtheorem{cor}{Corollary}[section]
\newtheorem{lem}{Lemma}[section]
\newtheorem{rmk}{Remark}[section]
\newtheorem{exmp}{Example}[section]

\makeatletter

%%%%%
%%%%%
\let \al=\alpha
\let \be=\beta

\let \vare=\varepsilon

\let \de=\delta
\let \th=\theta

\let \ga=\gamma
\let \p=\partial
\let \q=\quad
\let \qq=\qquad
\let \med=\medskip
\let \smal=\smallskip

\let \ol=\overline

\newcommand{\R}{\mathbb{R}}
\newcommand{\N}{\mathbb{N}}

%\def\R{{\rm I\kern
%-1.6pt{\rm R}}}
%\def\C{{\rm |\kern
%-4.6pt{\rm C}}}
%\def\N{{\rm I\kern
%-4.0pt{\rm N}}}
%\def\eqa{\eqalign}

%%%%%%%%%%%%%%%%%%%%%%%%%%%%%%%%%%%%

%%%%%%%%%%%%%%%%%%%%%%

%\evensidemargin -0.1in
 \oddsidemargin 0.07in

\topmargin -0.2in
\textheight 8.38in \textwidth 6.40in

\begin{document}

%August 29, 2023

\vspace{0.2cm}

\begin{center}
\textbf{\Large{Asymptotic behaviour of  general nonautonomous Nicholson equations with mixed monotonicities}}
	\end{center}

\centerline{\scshape Teresa Faria
\footnote{This work was  supported by the National Funding from FCT - Funda\c c\~ao para a Ci\^encia e a Tecnologia (Portugal) under project UIDB/04561/2020.}}
%\footnote{Corresponding author.
%E-mail:~teresa.faria@fc.ul.pt.}
\smallskip
{\footnotesize
 \centerline{Departamento de Matem\'atica and CMAFcIO,}
   \centerline{ Faculdade de Ci\^encias, Universidade de Lisboa, Campo Grande, 1749-016 Lisboa, Portugal}}

%   \smal
%     \centerline{
%    Departamento de Matem\'atica and CMAFCIO, Faculdade de Ci\^encias, Universidade de Lisboa}
%     \centerline{
%    Campo Grande, 1749-016 Lisboa, Portugal}
%
%    \centerline{  Email: teresa.faria@fc.ul.pt}
%
% (Henrique?)
% 
\vskip .5cm

\begin{abstract}  

A general nonautonomous Nicholson equation with multiple pairs of delays in {\it mixed monotone} nonlinear terms is studied.
Sufficient conditions for permanence are given, with explicit lower and upper uniform bounds  for all positive solutions. Imposing  an additional condition on the size of some of the delays, and by using an adequate difference equation of the form $x_{n+1}=h(x_n)$, we show that  all positive solutions 
 are globally attractive. In the case of a periodic equation, a criterion for existence of a globally attractive positive solution is provided.
  The results here constitute a  significant improvement  of  recent literature, in view of   the generality of the equation under study and of sharper  criteria  obtained  for situations covered in recent works. %such as when there exists a positive equilibrium. 
  Several examples illustrate the results.
\end{abstract}

 {\it Keywords}: Nicholson equation, mixed monotonicity, global attractivity, permanence, difference equations.
 
% delay differential equations;  
%Nicholson equation;  mixed monotonicity; global  attractivity; stability;  permanence.

{\it 2020 Mathematics Subject Classification}: 34K12,  34K20, 34K25,  92D25.

\section{Introduction}
\setcounter{equation}{0}

%The famous Nicholson's blowflies equation
% % $x'(t)=- \delta x(t)+ p x(t-\tau)e^{-a x(t-\tau)}\ (p,\de,\tau>0)$
%\begin{equation}\label{N}
%    x'(t)=- \delta x(t)+ p x(t-\tau)e^{-a x(t-\tau)} \q (p,\de,\tau>0)
%  \end{equation}
%  was  introduced in \cite{GBN}, to model  the life cycle of the Australian  blowfly, responsible for a sheep plague studied by the  entomologist A. Nicholson in   the 1950's.
%Here, $x(t)$ stands for the size of the adult blowfly population at time $t$ and $\tau$ is the  time of the life-cycle of the fly; a biological interpretation of the other parameters can be found in  e.g.\cite{ER19,LongGong}.

Recently, delay differential equations (DDEs) with  one or several  {\it mixed monotone}  nonlinear terms have received increasing attention 
%\begin{equation} \label{mix}x'(t)=Lx_t+g(x(t-\tau),x(t-\sigma)), \end{equation}
%where $L:C([-r,0];\R^n)\to \R^n$  (for $r=\max (\sigma,\tau)$ and $C([-r,0];\R^n)$ equipped with the supremum norm) is a linear bounded  operator and 
 %where $g(x,y)$ is monotone  increasing in $x$ and monotone decreasing in $y$. 
%Rather than discrete delays $\tau,\sigma$, models with distributed delays  have also been studied.
%The literature  goes back to the works Chen  \cite{Chen} and Smith \cite{smith06,smith08}.
%Lately this topic has received much attention,   %see e.g.   
  \cite{bb2016,bb2017,ER19,ElRuiz20,ElRuiz22,FO19,GHM,huang20,smith08}. A variant of the famous Nicholson's blowflies equation
 % $x'(t)=- \delta x(t)+ p x(t-\tau)e^{-a x(t-\tau)}\ (p,\de,\tau>0)$
\begin{equation}\label{N}
    x'(t)=- \delta x(t)+ p x(t-\tau)e^{-a x(t-\tau)} \q (p,\de,\tau>0),
  \end{equation}
   introduced in \cite{GBN} to model  the life cycle of the Australian  blowfly,
is the autonomous Nicholson-type equation with two different delays studied  in \cite{ER19}:
% version of  \eqref{eq:nicholson_1pair}, %with two different delays
\begin{equation} \label{Nich_ER}
  x'(t)=- \delta x(t)+ p x(t-\tau)e^{-a x(t-\sigma)},
  \end{equation}
  with  $p,\de,a,\tau,\sigma> 0$, where $\tau,\sigma$   represent respectively the  incubation and the maturation delays for the blowfly \cite{bb2017}; a biological interpretation of the other parameters can be found in  e.g.\cite{ER19,GBN,LongGong,Xu2}.   In  \eqref{Nich_ER}, 
  %or more generally \eqref{eq:nicholson_1pair},   
  there is  a {\it  mixed monotonicity} in the nonlinear term, which is expressed  by  $g(x(t-\tau),x(t-\sigma))$ with  $g(x,y):=pxe^{-ay}$   monotone increasing in the first variable and monotone decreasing in the second one. The global attractivity of its positive equilibrium $K=\frac{1}{a}\log (p/\de)$  was  proven by  El-Morshedy and  Ruiz-Herrera \cite{ER19} under the conditions 
  $p>\delta$,  $\tau\ge\sigma>0$ and
  $(e^{\de \sigma}-1)\log(p/\de)\le 1.$    Note however that for \eqref{Nich_ER} with $p/\de >1$,  $K$ is never  globally asymptotically stable (GAS) for all values of the delays $\tau,\sigma$, as shown by an example in \cite{Faria22} -- whereas for the classic Nicholson's equation  \eqref{N} the  equilibrium $K>0$ is {\it absolutely} (i.e., for all $\tau>0$) GAS if $1<p/\de\le e^2$.

% contrarily to what happens for the classic Nicholson's equation $x'(t)=- \delta x(t)+ p x(t-\tau)e^{-a x(t-\tau)}\, (\de, p,a,\tau>0$), for which the positive equilibrium is GAS if $1<p/\de\le 2$
%

In \cite{FariaPrates,LongGong}, the  authors studied the mixed monotone Nicholson's equations with multiple pairs of time-varying delays given by
\begin{equation}\label{eq:nicholson}
    x'(t)=\beta (t) \bigg(-\delta x(t)+\sum_{j=1}^{m} p_j  x(t-\tau_j(t)) e^ {-a_j x(t-\sigma_j(t))}\bigg)\, , \q t\ge t_0, 
\end{equation}
where $p_j , a, \delta \in (0, \infty )$, $\beta(t), \sigma_j(t), \tau_j(t)$ are continuous, nonnegative and bounded, and $\be(t)$  is also bounded from below  by a positive constant; see also \cite{bb2017} for the study of a Mackey-Glass equation with one pair of {\it mixed} delays and for the  open problem about the stability of equilibria for \eqref{eq:nicholson} with $m=1$.
  When $p:=\sum_{j=1}^{m} p_j  \le\delta$, Long and Gong \cite{LongGong} showed that the equilibrium $0$  is a global attractor of all positive solutions. 
%Throughout the remainder of this paper, 
%Here, for  \eqref{eq:nicholson}
If  $p>\de$, 
 there is always a positive equilibrium $K$, often called the {\it carrying capacity},  defined by the identity
\begin{equation}\label{equilK} 
 \sum_{j=1}^m p_j  e^{-a_j K}=\delta.
\end{equation}
Using techniques inspired in  \cite{ER19},  Faria and Prates  \cite{FariaPrates}   established a criterion for the global attractivity of the positive equilibrium  $K$ of \eqref{eq:nicholson}, which retrieves  the one in  \cite{ER19} for the very particular case of equation \eqref{Nich_ER} and in fact solves (in a broader setting) the open problem set in \cite{bb2017} .

The main goal of this paper is to study the   general nonautonomous scalar model of the form
 \begin{equation}\label{general}
    x'(t)=\sum_{j=1}^{m} p_j(t)  x(t-\tau_j(t)) e^ {-a_j(t) x(t-\sigma_j(t))}-\delta(t) x(t),\qq t\ge t_0,
\end{equation}
with coefficients and delays continuous and nonnegative (but with coefficients  not necessarily bounded), and provide criteria for permanence and global attractivity of  all solutions of \eqref{general} which generalize and enhance previous results in the literature. As a sub-product,  for a periodic equation \eqref{general}, a criterion for the existence of a globally attractive positive solution is given.

The main novelties of the present framework are 
 to consider the very general equation \eqref{general}, where all the coefficients and delays  are non-autonomous
and  the boundedness of  the coefficients $p_j(t)$ and $\de(t)$ is not a priori  assumed.  
Moreover, 
   the  results in the present paper also improve the main  criterion in  \cite{FariaPrates} on the global attractivity of $K$ given by \eqref{equilK}, as 
 several   hypotheses  in \cite{FariaPrates} are proven here to be superfluous. 
%We  also show that the result established in  \cite{FariaPrates} on the global attractivity of the positive equilibrium $K$ of \eqref{eq:nicholson} is still valid under less restrictive assumptions. 

%With these main goals in mind,   we  here provide criteria for the permanence and global attractivity of \eqref{general}. 
Here,   the   technique in   \cite{bb2016} is  a starting point to find conditions for the permanence,   whereas 
  the method to address the global attractivity of \eqref{general} is a follow up of the one in \cite{ER19,FariaPrates}.   
  The arguments in  \cite{bb2016,ER19,FariaPrates} are most helpful, however not only must careful adjustments
 be implemented, but also further reasonings and estimates provided in order to deal with the main difficulties one should overcome:  
% when trying to apply  known techniques to \eqref{general} consist, on one hand,
%in the absence (in general) of  a positive equilibrium  and, on the other hand,
%in the time-dependence of all the coefficients.
 the nonautonomous character of all the coefficients and delays and the absence, in general, of a positive constant solution.

% In fact,  when applied to \eqref{eq:nicholson}  the result on the global attractivity of $K$  in this paper  is stronger

In the literature, some classical techniques to show that a solution of a nonautonomous DDE is a global attractor encompasse  construction of Lyapunov functionals or comparison results from  the theory of monotone systems \cite{smith}.  These methods are not easy to apply to DDEs with  mixed monotonicities. As mentioned, 
we extend
  the approach applied  in \cite{ER19} to \eqref{Nich_ER} and in \cite{FariaPrates} to  \eqref{eq:nicholson}. To show the global attractivity of any positive solution $x^*(t)$, the major difficulty is to rule out the existence of  solutions with an oscillatory behaviour about $x^*(t)$: for this,   the main tool  is to associate to the DDE \eqref{general} a suitable  autonomous difference equation  (DE) of the form $x_{n+1}=h(x_n)$,  where $h$ has negative   Schwarzian derivative.  Under certain assumptions,     the global attractivity of a fixed solution  for the original DDE is linked to  the  attractivity of  a fixed point for an auxiliary difference equation  $x_{n+1}=h(x_n)$.    For early works exploiting the relation between global attractivity for  scalar DDEs and  associated DEs, see e.g.  \cite{LizTT,M-PN1,M-PN2,smith08} and references therein.

  %The main idea  is that, under certain assumptions,  from the global attractivity of an equilibrium $K_0$ for a suitable DE $x_{n+1}=h(x_n)$,   one derives that any fixed solution $x^*(t)$ is a global attractor for the original equation.    
To apply this approach, an auxiliary provisional hypothesis of existence of a positive equilibrium $K$  is first imposed, leading to the construction of
an original function $h$  which allows to generalize and improve the results in \cite{FariaPrates}. Then, this hypothesis is removed and we show how to carry out the arguments when such a constant solution does not exist, in order to derive the attractivity of any solution of \eqref{general}.
%We treat separately two situations:  when there exists a positive equilibrium $K$ of \eqref{general} and . 
Of course, in the absence of a positive equilibrium  the results are not as sharp as the ones obtained when assuming its existence, as for \eqref{eq:nicholson}.  
For a related recent work dealing with Nicholson as well as other families of scalar DDEs with one pair of ``mixed monotone delays", see \cite{ElRuiz22}. 
%-- in which case,  the criterion in the present paper is sharper than the one in \cite{FariaPrates}.

We now recall briefly some notation and
standard definitions used in the next sections. Consider a scalar DDE with finite delays in $[0,\tau]$ ($\tau>0$) written in abstract form as  $x'(t)=f(t,x_t)$, where $f:D\subset \R\times C\to \R$ is continuous, $C:=C([-\tau, 0];\R)$ equipped with  the supremum norm  $\|\phi\|=\max_{\th\in[-\tau,0]}|\phi(\th)|$ is taken as the phase space and $x_t\in C$ is the segment of the solution $x(t)$ defined by $x_t(\th)=x(t+\th)$ for $\th\in [-\tau, 0]$.
As the present equations  are inspired in mathematical  biology models,  only  positive or nonnegative solutions  are  to be considered. In this way, we take $C_0^+:=\big\{\phi\in C : \phi(\th)\ge 0\ {\rm on}\ [-\tau,0),  \phi(0)>0 \}$  as the set of admissible initial conditions.
%For $t\ge t_0$,   as usual $x_t$ is  given by $x_t(\th)=x(t+\th), -\tau\le \th\le 0$. 
 Under uniqueness conditions, $x(t; t_0, \phi)\in \R$ denotes the solution of $x'(t)=f(t,x_t)$ with the initial condition 
\begin{equation}\label{IC}
    x_{t_0}=\phi\in C_0^+,
\end{equation}
for $(t_0,\phi)\in D$.
For \eqref{general},  standard results  from the theory of DDEs \cite{HL,smith,smith1} 
%\cite[p.~78]{smith1} 
yield that solutions  with initial conditions \eqref{IC} are  defined and positive on $[t_0,\infty)$, see Lemma \ref{lem2.0}. The usual concepts of stability  and attractivity always refer to solutions with initial conditions in $C_0^+$, as given below.

\begin{defn} %Consider a scalar DDE $x'(t)=f(t,x_t)$ and take  initial conditions in $C_0^+$.
A solution  $x^*(t)$ of \eqref{general}-\eqref{IC} is said to be {\it persistent}, respectively {\it permanent}, if $\liminf_{t\to\infty}  x^*(t)>0$, respectively $0<\liminf_{t\to\infty}  x^*(t)\le \limsup_{t\to\infty}  x^*(t)<\infty$. 
%; $x^*$ is {\bf globally asymptotically stable}  if it is stable and globally attractive.
The equation \eqref{general}  is {\bf  (uniformly) persistent}   if there exists $m>0$ such that
$$\liminf_{t\to\infty}  x(t;t_0, \phi)  \ge m\q {\rm for\  all}\ \phi \in C_0^+,$$
and   is   {\bf (uniformly)  permanent} if  there exist $ m,M>0$ such that
\begin{equation}\label{permanence}m\le \liminf_{t\to \infty} x(t;t_0, \phi)\le \limsup_{t\to \infty} x(t;t_0, \phi)\le M\q {\rm for\  all}\ \phi \in C_0^+.\end{equation}
A nonnegative
 solution $x^*(t)$ is called a {\bf global attractor}   if 
$\lim_{t\to\infty}(x(t)-x^*(t))=0$
for all solutions $x(t)=x(t;t_0,\phi)$  of \eqref{general}  with initial conditions \eqref{IC}. In this situation, \eqref{general} is said to be {\bf globally attractive} (in $C_0^+$) as every  solution with initial conditions in $C_0^+$ is a   global attractor.
\end{defn}

The organization of the paper is now presented. Section 2 is essentially  devoted to find conditions for the permanence of \eqref{general}. Futhermore,
%as in some other works of the author\cite{Faria21a,Faria21b},  
 explicit positive lower and upper uniform bounds for solutions as $t\to\infty$   are  given.
As we shall see, the permanence is also a key  step to show  the attractivity of all solutions of \eqref{general}. Assuming an auxiliary condition, that will be latter removed, some preliminary lemmas about the asymptotic behaviour of solutions are  included in Section 3.
In Section 4, the main criteria for the global attractivity of a positive solution $x^*(t)$ are established
 by constructing an original auxiliary difference equation.
 %pursuing the method  in \cite{FariaPrates}. 
  Section 4 also includes a further result for periodic equations and several
 examples illustrating the application of our results.  The paper ends with a brief section of conclusions and discussion.

\section{Permanence}
\setcounter{equation}{0}

%From now on, consider \eqref{eq:nicholson} under the  following general assumptions:
%\begin{itemize}
%\item[] $p_j , a_j, \delta \in (0, \infty )$,  $\beta:[t_0, \infty )\to (0, \infty ), \sigma_j, \tau_j:[t_0, \infty )\to [0, \infty )$ are continuous, $\sigma_j, \tau_j$ are  bounded ($1\le j\le m$) with
%$$\tau=\max \{\tau^+,\sigma^+\}, \q {\rm where}\q	 \tau^+=\max_{1\le j\le m}\sup_{t\ge t_0} \tau_j(t), \sigma^+=\max_{1\le j\le m}\sup_{t\ge t_0} \sigma_j(t).$$
%\end{itemize}
%
%For simplicity, and without loss of generality, we shall take $t_0=0$.  As mentioned, $C:=C([-\tau,0];\R)$ with the supremum norm is the phase space and
% $C_0^+$%:=\big\{\phi\in C : \phi(\th)\ge 0\ {\rm on}\ [-\tau,0),  \phi(0)>0 \}$   
% the set of admissible initial conditions.
%Denote
%$$p=\sum_{j=1}^{m} p_j ,\q a^+=\max_j a_j,\q a^-=\min_j a_j.$$
%

%and always .
%First, we shall see  that this condition implies not only the existence of a positive equilibrium $K$, but also  the permanence of \eqref{eq:nicholson}. 

In this section, we establish the permanence  of \eqref{general}.  Consider the general nonautonomous scalar model \eqref{general}
% \begin{equation}\label{general}
%    x'(t)=\sum_{j=1}^{m} p_j(t)  x(t-\tau_j(t)) e^ {-a_j(t) x(t-\sigma_j(t))}-\delta(t) x(t),\qq t\ge t_0
%\end{equation}
under the following general conditions:
\begin{itemize}
\item[(A0)]
$p_j , a_j, \delta: [t_0, \infty )\to (0, \infty )$ and  $ \sigma_j, \tau_j:[t_0, \infty )\to [0, \infty )$ are  continuous functions with $ \sigma_j(t), \tau_j(t)$ bounded and $a_j(t)$ bounded from below and from above by positive constants ($1\le j\le m$).
\end{itemize} 
Note that $\delta(t)$ and $p_j(t)$ need not be bounded or bounded  from below by a positive constant.
%(or maybe we only need that $\int_{t_0}^\infty \de(t)\, dt=\infty$). 
For $$\tau=\max\{\sup_{t\ge t_0} \tau_j(t), \sup_{t\ge t_0} \sigma_j(t):j=1,\dots,m\},$$
 \eqref{general} is considered as a DDE in the phase space $C=C([-\tau,0];\R)$, with $C_0^+$ as the set of initial conditions. 
 For simplicity and without loss of generality,  we take $t_0=0$. 
 %Solutions of \eqref{general} with initial conditions \eqref{IC} are positive and defined on $[t_0,\infty)$(ref).

For a bounded function $f:[0,\infty)\to \R$, we shall use the notation
$$f^+=\sup_{t\ge 0} f(x),\q f^-=\inf_{t\ge 0} f(x)$$
Thus, we have $0<a_j^-\le a_j(t)\le a_j^+ \, (1\le j\le m)$
 and $\tau=\max\{\tau_j^+,\sigma_j^+:j=1,\dots,m\}$. Set also
\[\begin{split}a^-=\min_j a_j^-,&\q  a^+=\max_j a_j^+,\\
 \sigma^+=\max_j \sigma_j^+,&\q  \tau^+=\max_j \tau_j^+.
\end{split}\]
%$$ 0<a^-=\min_j\inf a_j(t),\q  a^+=\max_j\sup a_j(t).$$

\begin{lem}\label{lem2.0} Under (A0), solutions of \eqref{general} with initial conditions $x_{0}=\phi\in C_0^+$ are positive and defined on $[0,\infty)$.
\end{lem}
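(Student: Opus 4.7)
The plan is to combine three standard ingredients: local existence from classical DDE theory, positivity from the variation-of-constants representation, and a linear a priori bound that rules out blow-up in finite time. The only subtle point, due to (A0), is that $p_j(t)$ and $\delta(t)$ are merely continuous (not necessarily bounded), so the Gronwall-type bound must be carried out on arbitrary compact subintervals before extending globally.

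First, under (A0), the functional $f(t,\phi)=\sum_{j=1}^m p_j(t)\phi(-\tau_j(t)) e^{-a_j(t)\phi(-\sigma_j(t))}-\delta(t)\phi(0)$ is continuous in $t$ and locally Lipschitz in $\phi\in C$, so the cited theory \cite{HL,smith,smith1} yields a unique maximally defined solution $x(t)=x(t;0,\phi)$ on some interval $[-\tau,T_{\max})$ with $T_{\max}\in(0,+\infty]$.

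For positivity, rewrite \eqref{general} as $x'(t)+\delta(t)x(t)=F(t)$ with $F(t):=\sum_j p_j(t)x(t-\tau_j(t))e^{-a_j(t)x(t-\sigma_j(t))}$ and apply the integrating factor $e^{\int_0^t\delta}$ to obtain
$$x(t)=\phi(0)\, e^{-\int_0^t\delta(s)\,ds}+\int_0^t e^{-\int_s^t\delta(u)\,du}F(s)\,ds.$$
Assume for contradiction that $t^*:=\inf\{t\in(0,T_{\max}):x(t)\le 0\}$ is finite. Since $\phi(0)>0$, continuity forces $t^*>0$, $x(t^*)=0$, and $x(t)>0$ on $[0,t^*)$; combined with $\phi\ge 0$ on $[-\tau,0)$, this gives $x(t-\tau_j(t)),x(t-\sigma_j(t))\ge 0$ for all $t\in[0,t^*]$, hence $F\ge 0$ on $[0,t^*]$. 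Setting $t=t^*$ in the formula yields $x(t^*)\ge \phi(0)e^{-\int_0^{t^*}\delta}>0$, contradicting $x(t^*)=0$. Thus $x(t)>0$ on $[0,T_{\max})$.

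Finally, the elementary estimate $ye^{-az}\le y$ for $y,a,z\ge 0$ gives
$$x'(t)\le \sum_{j=1}^m p_j(t)\,x(t-\tau_j(t)).$$
Let $u(t):=\max_{-\tau\le s\le t}x(s)$, which is non-decreasing, and for any $T\in(0,T_{\max})$ set $P(T):=\max_{0\le t\le T}\sum_j p_j(t)<\infty$ (finite by continuity of $p_j$). Integrating on $[0,t]\subset[0,T]$ gives $x(t)\le \|\phi\|+P(T)\int_0^t u(s)\,ds$, and since $x(s)\le\|\phi\|$ for $s\in[-\tau,0]$, we get $u(t)\le \|\phi\|+P(T)\int_0^t u(s)\,ds$. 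Gronwall's inequality then yields $u(t)\le \|\phi\|e^{P(T)t}$, so $x$ stays bounded on every compact subinterval of $[0,T_{\max})$. By the standard continuation criterion this forces $T_{\max}=+\infty$, completing the proof.

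The main obstacle is simply the unboundedness of the coefficients, which is handled by the local (interval-wise) Gronwall estimate above; the rest is routine bookkeeping with the variation-of-constants formula and the sign of the nonlinearity.
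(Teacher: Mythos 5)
Your proof is correct and follows essentially the same route as the paper, which simply records the two-sided differential inequality $-\delta(t)x(t)\le x'(t)\le -\delta(t)x(t)+\sum_{j} p_j(t)x(t-\tau_j(t))$ and cites standard comparison and continuation results; you have merely written out the details (variation of constants and a first-crossing argument for positivity, Gronwall for non-explosion). The only phrasing to tighten is the final step: boundedness on compact subintervals of $[0,T_{\max})$ is automatic and proves nothing, but your estimate $u(t)\le\|\phi\|e^{P(T)t}$ is in fact uniform on all of $[0,T_{\max})$ when $T_{\max}<\infty$, since $P(T)\le\max_{[0,T_{\max}]}\sum_j p_j<\infty$ by continuity of the $p_j$ on $[0,\infty)$, and that uniform bound is what the continuation criterion actually needs.
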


\begin{proof}
Since positive solutions $x(t)$ of \eqref{general}-\eqref{IC} satisfy 
%$-\de(t)x(t)\le x'(t)$, they are positive   for $t\ge t_0$ whenever defined. Hence, they also satisfy $ x'(t)\le -\de(t)x(t)+\sum_{j=1}^{m} p_j(t)  x(t-\tau_j(t)) ,$
$$-\de(t)x(t)\le x'(t)\le -\de(t)x(t)+\sum_{j=1}^{m} p_j(t)  x(t-\tau_j(t)) ,$$
the result follows by results on continuation and comparison of solutions \cite{HL,smith1}.
\end{proof}

Define $$p(t):=\sum_{j=1}^{m} p_j(t),\qq t\ge 0.$$

Assuming (A0) and 
 $\int_0^\infty\de(s)\, ds=\infty$, the following  criterion for  extinction of  \eqref{general} was established in \cite{XuCaoGuo}:
% Let $p_j , a_j, \delta: [t_0, \infty )\to (0, \infty )$,  $ \sigma_j, \tau_j:[t_0, \infty )\to [0, \infty )$ be  continuous with $a_j, \delta, \sigma_j, \tau_j$ bounded ($1\le j\le m$) and $\delta(t)\ge \de^->0$ (or maybe we only need that $\int_{t_0}^\infty \de(t)\, dt=\infty$). Then:\\
  (i) if $ \sup_{t\ge T} \frac{p(t)}{\de (t)}\le 1$, the equilibrium zero is globally asymptotically stable;
  (ii) if $ \sup_{t\ge T} \frac{p(t)}{\de (t)}< 1$, the equilibrium zero is globally exponentially stable.
See also
  \cite{LongGong}, where this result on extinction was proven
 only for \eqref{eq:nicholson} and assuming the more restrictive setting of $0<\be^-\le \be(t)\le \be^+<\infty$.

In this paper, we  analize the case $\inf_{t\ge T}  \frac{p(t)}{\de(t)}>1$ (for some $T>0$), and derive conditions  for permanence and global attractivity of positive solutions.
We however remark that the main result on attractivity, given in Section 4, only  requires the permanence of  each solution, rather than the uniform permanence. 
%See Remark \ref{rmk3.1}  for further comments. 

The main  assumptions to be imposed hereafter %, either in part or entirely, 
are described below:
\begin{itemize}
\item[(A1)] $ \liminf_{t\to\infty} \frac{p(t)}{\de(t)}>1$ and $ \limsup_{t\to\infty} \frac{p(t)}{\de(t)}<\infty$;
%\begin{equation}\label{p>dgeneral}
%\al:= \liminf_{t\to\infty} \frac{p(t)}{\de(t)}>1,\q\ga:= \limsup_{t\to\infty} \frac{p(t)}{\de(t)}<\infty.
% \tag{A1}\end{equation} 
\item[(A2)] $ \limsup_{t\to \infty} \int_{t-\tau}^t p(s)\, ds<\infty, \ j=1,\dots,m$;
%\begin{equation}\label{A2}
% \limsup_{t\to \infty} \int_{t-\tau}^t p(s)\, ds<\infty, \ j=1,\dots,m,
%\tag{A2}
%\end{equation}
 \item[(A3)] $\int_0^\infty \de(s)\, ds=\infty$.
% \begin{equation}\label{A3}
%\int^\infty \de(s)\, ds=\infty
%\tag{A3}
%\end{equation}
 \end{itemize}
 
 The analysis starts with a preliminary result. 

\begin{lem} \label{lem2.1}  Assume (A0), (A1).
  Then, there is no positive solution $x(t)$ of \eqref{eq:nicholson} such that $ \lim_{t\to \infty}x(t)=0$ or $ \lim_{t\to \infty}x(t)=\infty.$ 
\end{lem}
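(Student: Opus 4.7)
The plan is to argue by contradiction in each case, pinning down times at which the sign of $x'(t)$ prescribed by the differential equation conflicts with the sign forced by $x$ touching its running extremum.

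Suppose first that $\lim_{t\to\infty}x(t)=0$. Using (A1), I would fix $\al_1\in\bigl(1,\liminf_{t\to\infty}p(t)/\de(t)\bigr)$ and $T_1$ so that $p(t)\ge \al_1\de(t)$ for $t\ge T_1$, then choose $\vare>0$ small enough that $K:=e^{-a^+\vare}\al_1>1$, and pick $T_2\ge T_1+\tau$ with $x(s)\le\vare$ for all $s\ge T_2-\tau$. Bounding each factor $e^{-a_j(t)x(t-\sigma_j(t))}$ from below by $e^{-a^+\vare}$ yields, for $t\ge T_2$,
$$
x'(t)\ge e^{-a^+\vare}p(t)\,\underline{x}(t)-\de(t)x(t),\qquad \underline{x}(t):=\min_{s\in[t-\tau,t]}x(s).
$$
The core step is to exhibit arbitrarily large $t^*$ at which $x(t^*)=\underline{x}(t^*)$. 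I would introduce the running infimum $\widetilde m(t):=\inf_{s\in[T_2,t]}x(s)$, which is continuous, nonincreasing and tends to $0$. For every $\bar t\ge T_2+\tau$, continuity of $x$ on the compact interval $[T_2,\bar t]$ delivers $t^*\in[T_2,\bar t]$ with $x(t^*)=\widetilde m(\bar t)=\widetilde m(t^*)$; since $\widetilde m(t^*)\to 0$, these $t^*$ must escape to infinity. For any such $t^*\ge T_2+\tau$ one has $[t^*-\tau,t^*]\subseteq[T_2,t^*]$, whence $\underline{x}(t^*)=x(t^*)$, and $x(s)\ge x(t^*)$ for all $s\in[T_2,t^*]$ forces $x'(t^*)\le 0$. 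On the other hand, substituting $\underline x(t^*)=x(t^*)$ into the displayed inequality gives
$$
x'(t^*)\ge x(t^*)\bigl(e^{-a^+\vare}p(t^*)-\de(t^*)\bigr)\ge (K-1)\,\de(t^*)\,x(t^*)>0,
$$
the desired contradiction.

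The case $\lim_{t\to\infty}x(t)=\infty$ is handled symmetrically. Fix $\be>\limsup_{t\to\infty}p(t)/\de(t)$ and $T_1$ with $p(t)\le\be\de(t)$ for $t\ge T_1$. Since $\underline x(t)\to\infty$, pick $T_2\ge T_1+\tau$ so that $\be\,e^{-a^-\underline x(t)}<1/2$ for $t\ge T_2$. At any $t\ge T_2$ for which $x(t)=\overline x(t):=\max_{s\in[t-\tau,t]}x(s)$, the bounds $x(t-\tau_j(t))\le x(t)$ and $e^{-a_j(t)x(t-\sigma_j(t))}\le e^{-a^-\underline x(t)}$ yield
$$
x'(t)\le\de(t)x(t)\Bigl(\tfrac{p(t)}{\de(t)}e^{-a^-\underline x(t)}-1\Bigr)\le -\tfrac12\de(t)x(t)<0.
$$
Using the running supremum $\widetilde M(t):=\sup_{s\in[T_2,t]}x(s)\to\infty$ in place of $\widetilde m$, the same compactness argument yields unboundedly many $t^*\ge T_2+\tau$ with $x(t^*)=\widetilde M(t^*)=\overline x(t^*)$ and $x'(t^*)\ge 0$, contradicting the strict inequality just obtained.

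The main obstacle I foresee is precisely this running-extremum step: checking continuity of $\widetilde m$ (respectively $\widetilde M$), verifying that its value on a compact window is realized by $x$ at some interior time $t^*$, and showing that $t^*\to\infty$ as $\bar t\to\infty$. Once that geometric fact is set up cleanly, the sign comparison at the privileged times $t^*$ follows directly from the differential equation and the ratio estimate supplied by (A1).
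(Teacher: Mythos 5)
Your argument is correct, and it is complete where the paper's is not self-contained: the paper's proof of Lemma \ref{lem2.1} only verifies the ratio inequalities \eqref{mM} (namely $h(t,u,m)/(\de(t)u)\ge\al_1>1$ for small $m$ and $h(t,u,M)/(\de(t)u)\le\ga_1<1$ for large $M$) and then invokes Theorems 4.1 and 5.2 of Berezansky--Braverman \cite{bb2016} to exclude convergence to $0$ or $\infty$. You instead unpack what those cited theorems do: the comparison of the sign of $x'(t^*)$ forced by the equation (via the lower bound $e^{-a^+\vare}p(t^*)\underline{x}(t^*)$, resp.\ the upper bound $p(t^*)x(t^*)e^{-a^-\underline{x}(t^*)}$) against the sign forced by $x$ attaining its running infimum (resp.\ supremum) at $t^*$. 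Your running-extremum construction is sound --- the minimizers/maximizers over $[T_2,\bar t]$ must escape to infinity because $x$ is bounded away from $0$ (resp.\ bounded above) on compact subintervals while $\widetilde m(\bar t)\to 0$ (resp.\ $\widetilde M(\bar t)\to\infty$) --- and the left-derivative sign argument at an interior extremizer is legitimate since $x$ is $C^1$ for $t>t_0$. The trade-off is that your version is longer but self-contained and makes transparent exactly which parts of (A0)--(A1) are used ($a_j$ bounded above for the $x\to 0$ case, bounded below for the $x\to\infty$ case, and $\de(t)>0$ pointwise); the paper's version is shorter but defers the entire dynamical content to \cite{bb2016}. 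Notably, the same extremum-comparison mechanism reappears explicitly in Step 2 of the paper's proof of Theorem \ref{thm2.1}, so your proof is very much in the spirit of the paper even though it does not follow its literal route.
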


\begin{proof} 
%Let $$\al=\liminf_{t\to\infty} \frac{p(t)}{\de(t)},\q \ga=\limsup_{t\to\infty} \frac{p(t)}{\de(t)}.$$
Define $h_j(t,u,v)=p_j(t)ue^{-a_j(t)v},\, 1\le j\le m,$ and $h(t,u,v)=\sum_jh_j(t,u,v)$. Note that the functions $h_j(t,u,v)$ are monotone increasing in $u$ and monotone decreasing in $v$. 

From (A1),  for some $t\ge t^*$ set
$\al_0:=\inf_{t\ge t^*} \frac{p(t)}{\de(t)}>1$ and $ \ga_0:=\sup_{t\ge t^*} \frac{p(t)}{\de(t)}$. For $u>0, t\ge t^*$, we have
$$ \frac{h(t,u,m)}{\de(t)u}= \frac{\sum_jp_j(t)e^{-a_j(t)m}}{\de(t)}
\ge \frac{\sum_jp_j(t)e^{-a_j^+m}}{\de(t)}
\ge e^{-a^+m}\al_0\to \al_0>1\q {\rm as}\q m\to 0$$
and 
$$ \frac{h(t,u,M)}{\de(t)u}= \frac{\sum_jp_j(t)e^{-a_j(t)M}}{\de(t)}
\le \frac{\sum_jp_j(t)e^{-a_j^-M}}{\de(t)}\le e^{-a^-M}\ga_0\to 0\q {\rm as}\q M\to \infty.$$
Hence, there are  $\al_1,\ga_1$ and $m_0,M_0>0$ such that \begin{equation}\label{mM}
 \frac{h(t,u,m)}{\de(t)u}\ge \al_1>1\, ,\q \frac{h(t,u,M)}{\de(t)u}\le \ga_1 <1 \ ,\q t\ge t^*,
    \end{equation}
 for $ 0<m\le m_0$ and $M\ge M_0$.  From \cite[Theorems 4.1 and 5.2]{bb2016}, this implies that there is no solution $x(t)$ of \eqref{general}-\eqref{IC} with either $\lim_{t\to \infty} x(t)=0$ or 
 $\lim_{t\to \infty} x(t)=\infty$.
\end{proof}

\begin{thm} \label{thm2.1}  Assume (A0), (A1) and (A2). Then, any positive solution of  \eqref{general} is permanent. 
% \begin{equation}\label{p>dgeneral}
% \liminf_{t\to\infty} \frac{p(t)}{\de(t)}>1,\q \limsup_{t\to\infty} \frac{p(t)}{\de(t)}<\infty.
% \tag{A1}\end{equation}  Then, there is no positive solution $x(t)$ of \eqref{general} such that $ \lim_{t\to \infty}x(t)=0$ or $ \lim_{t\to \infty}x(t)=\infty.$ 
If in addition (A3)  holds,
then \eqref{general} is permanent, with  %for $K>0$ the equilibrium of \eqref{eq:nicholson}
\begin{equation}\label{permGen}
    m \le \liminf_{t\to \infty}x(t)\le \limsup_{t\to \infty}x(t) \le M
\end{equation}
 for any solution $x(t)$ of \eqref{general} with initial condition in $C_0^+$,
 where
\begin{equation}\label{boundsmM}
%\begin{split}
%m&= \frac{1}{a^+}\log (\liminf_{t\to\infty} \frac{p(t)}{\de(t)})\exp(-2\limsup_{t\to\infty} \int_{t-\tau}^t \de (s)\, ds),\\ M&=\frac{1}{a^-}\log (\limsup_{t\to\infty} \frac{p(t)}{\de(t)})\exp(2\limsup_{t\to\infty} \int_{t-\tau}^t \de (s)\, ds+2\limsup_{t\to\infty} \int_{t-\tau}^t p(s)\, ds).
%\end{split}
%\end{equation}
m= \frac{1}{a^+}\log \al\exp(-(2D +P)),\q M=\frac{1}{a^-}\log \ga\exp(2(D+P)).
\end{equation}
and \begin{equation}\label{alphagamma}
\begin{split}
&\al=\liminf_{t\to\infty} \frac{p(t)}{\de(t)},\q \ga=\limsup_{t\to\infty} \frac{p(t)}{\de(t)}\\
&D=\limsup_{t\to\infty} \int_{t-\tau}^t \de (s)\, ds,\q P=\limsup_{t\to\infty} \int_{t-\tau}^t p(s)\, ds \end{split}
\end{equation}

\end{thm}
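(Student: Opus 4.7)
My plan is to prove the statement in two stages: first that every positive solution satisfies $0<\liminf_{t\to\infty} x(t)\le \limsup_{t\to\infty} x(t)<\infty$ assuming only (A0)--(A2), and second that the sharp uniform bounds \eqref{boundsmM} hold once (A3) is added. The core tool throughout is a pair of oscillation estimates controlling the variation of $\log x$ over time windows of length $\tau$. The lower oscillation bound is immediate: from $x'(t)\ge -\delta(t)x(t)$, i.e.\ $(\log x)'(t)\ge -\delta(t)$, integration yields $x(s_1)/x(s_2)\le \exp\bigl(\int_{s_1}^{s_2}\delta\bigr)$ for $s_1\le s_2$ with $s_2-s_1\le\tau$, which is eventually bounded by $e^{D+\varepsilon}$. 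Note that $D<\infty$ is automatic from (A1)+(A2), since $\delta(t)\le p(t)/\alpha$ eventually, giving $D\le P/\alpha$. Feeding this back into $(\log x)'(t)\le -\delta(t)+p(t)\max_j x(t-\tau_j)/x(t)$ and integrating over a window of length $\tau$ supplies the complementary upper bound on $x(s_2)/x(s_1)$ in terms of $D$ and $P$.

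For Stage 1, I would argue by contradiction. If $\limsup_{t\to\infty} x(t)=\infty$, pick running maxima $\tilde t_n\to\infty$ with $x(\tilde t_n)=\sup_{s\in[0,\tilde t_n]}x(s)\to\infty$; then $x'(\tilde t_n)\ge 0$ and $x(\tilde t_n-\tau_j(\tilde t_n))\le x(\tilde t_n)$, so dividing the inequality from \eqref{general} by $x(\tilde t_n)$ yields
\[
\delta(\tilde t_n)\le \sum_{j=1}^m p_j(\tilde t_n)\,e^{-a_j(\tilde t_n)\,x(\tilde t_n-\sigma_j(\tilde t_n))}.
\]
The oscillation estimate forces $x(\tilde t_n-\sigma_j(\tilde t_n))\ge x(\tilde t_n)\,e^{-D-\varepsilon}\to\infty$, so the right-hand side tends to $0$, contradicting $p(\tilde t_n)/\delta(\tilde t_n)\le \gamma<\infty$ from (A1). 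Dually, if $\liminf_{t\to\infty} x(t)=0$, running minima $\tilde s_n$ with $x(\tilde s_n)\to 0$ and $x'(\tilde s_n)\le 0$ satisfy $x(\tilde s_n-\tau_j(\tilde s_n))\ge x(\tilde s_n)$ and $x(\tilde s_n-\sigma_j)\to 0$; hence $e^{-a_j(\tilde s_n)\,x(\tilde s_n-\sigma_j)}\to 1$ and $\delta(\tilde s_n)/p(\tilde s_n)\ge 1+o(1)$, contradicting $\limsup_{t\to\infty}\delta/p=1/\alpha<1$. This establishes the permanence of every positive solution.

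For Stage 2, once $L:=\limsup_{t\to\infty} x<\infty$ and $\ell:=\liminf_{t\to\infty} x>0$ are known and $x'$ is bounded (by (A0) combined with Stage 1), the fluctuation lemma produces a sequence $t_n\to\infty$ with $x(t_n)\to L$ and $x'(t_n)\to 0$, and analogously $s_n$ for $\ell$. Substituting into \eqref{general} and using the oscillation bounds $x(t_n-\tau_j)/x(t_n)\le e^{D+P+\varepsilon}$ together with $x(t_n-\sigma_j)/x(t_n)\ge e^{-(D+P+\varepsilon)}$ (i.e.\ the variation of $\log x$ over any window of length $\tau$ is eventually at most $D+P$) leads to $a^- L\le \log\gamma+2(D+P)$, hence $L\le M$; the symmetric calculation at $s_n$ gives $\ell\ge m$. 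Hypothesis (A3) enters through a variation-of-constants representation: $\int_0^\infty\delta=\infty$ makes the initial-datum contribution $x(0)\,e^{-\int_0^t\delta}$ vanish, which renders the bounds $m, M$ independent of $\phi\in C_0^+$ and yields the uniform permanence \eqref{permGen}. The main obstacle is to sharpen the naive oscillation estimate $x(s_2)/x(s_1)\le e^{e^D P}$ down to $e^{D+P}$; this refinement --- exploiting more carefully that the nonlinear term $x(t-\tau_j)e^{-a_j x(t-\sigma_j)}$ is itself bounded by quantities involving only $D+P$ additively rather than multiplicatively --- is what permits the clean logarithmic dependence $2(D+P)$ in \eqref{boundsmM} instead of a doubly-exponential constant.
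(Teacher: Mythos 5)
Your Stage 1 is sound and close in spirit to the paper's own Steps 1--2: the paper likewise derives the lower oscillation bound $x(s)\le x(t)e^{\int_s^t\delta}$ from $x'\ge-\delta(t)x$ and reaches the same contradictions at running maxima and minima (its points $T^*$ and $S^*$), merely packaging the estimates through a partition into intervals $I_j$ of length $\tau$.

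The gap is in Stage 2, and you point at it yourself. The whole computation rests on the two-sided oscillation estimate ``the variation of $\log x$ over any window of length $\tau$ is eventually at most $D+P$'', but no proof is given; as you note, feeding $x(u-\tau_j(u))\le x(u)e^{D+\varepsilon}$ into $(\log x)'(u)\le -\delta(u)+p(u)\max_j x(u-\tau_j(u))/x(u)$ only yields $e^{e^{D}P}$, and the promised ``more careful exploitation'' is exactly the missing idea. The paper does not prove your additive bound either (and I do not see how to); it proves something weaker that still suffices: comparing $x$ on $I_j=[t_{j-1},t_j]$ with the solution of $y'=p(t)y$, $y(t_{j-1})=M_{j-1}:=\max_{I_{j-1}}x$ (legitimate because $x(u-\tau_j(u))\le\max\{M_{j-1},\max_{[t_{j-1},u]}x\}\le y(u)$), gives $\max_{I_j}x\le M_{j-1}e^{C_2}$, which combined with $\min_{I_j}x\ge M_{j-1}e^{-2C_1}$ yields only the two-window ratio $e^{2(C_1+C_2)}$; this is where the exponents $2(D+P)$ and $2D+P$ in \eqref{boundsmM} actually come from. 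Two further problems. First, even granting your oscillation bound, the inequality you extract, $a^-L\le\log\gamma+2(D+P)$, does not imply $L\le M=\frac{1}{a^-}\log\gamma\,e^{2(D+P)}$ (take $\log\gamma<1$ and $D+P$ small: then $ae^x<a+x$); the correct output of the substitution is multiplicative, of the form $a^-L\,e^{-\mathrm{osc}}\le\log\gamma$. Second, your account of (A3) misses its real role: when $\ell=L$ the fluctuation lemma only gives $x'(t_n)\to0$, and since $\delta(t)$ is not assumed bounded below you cannot divide by $\delta(t_n)$ to pass to the limit; convergent solutions must be handled separately by integrating the equation, and that integration --- not the uniformity of $m,M$ in $\phi$, which is automatic --- is precisely where $\int_0^\infty\delta=\infty$ is needed (the paper's Step 3).
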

\begin{proof} Define $\al,\ga, D, P$ as in \eqref{alphagamma}. Note that these constants  are well-defined and $D<P$ in virtue of (A1) and (A2).
%$$\al=\liminf_{t\to\infty} \frac{p(t)}{\de(t)},\q \ga=\limsup_{t\to\infty} \frac{p(t)}{\de(t)}.$$
As in the proof of Lemma \ref{lem2.1}, there are $t^*\ge 0$, with $\al_0:=\inf_{t\ge t^*} \frac{p(t)}{\de(t)}>1, \ga_0:=\sup_{t\ge t^*} \frac{p(t)}{\de(t)}$,  and $0<m_0\le M_0$ such that \eqref{mM} are satisfied  for $ 0<m\le m_0$ and $M\ge M_0$. 
%From (A2), define also
%  $$P:=\limsup_{t\to \infty} \int_{t-\tau}^t p(s)\, ds<+\infty,\q D:=
% \limsup_{t\to \infty} \int_{t-\tau}^t \de (s)\, ds.$$ Clearly, $D<P$ from (A1).
%Define $h_j(t,u,v)=p_j(t)ue^{-a_j(t)v},\, 1\le j\le m,$ and $h(t,u,v)=\sum_jh_j(t,u,v)$ and remark that functions $h_j(t,u,v)$ are monotone increasing in $u$ and monotone decreasing in $v$. 
 The proof  follows now in several steps. For Steps 1 and 2, we were inspired in  Berenzansky and Braverman \cite{bb2016}.

\smal

%{\it Step 1}.
%From \eqref{p>dgeneral}, set
%$\al_0:=\inf_{t\ge t^*} \frac{p(t)}{\de(t)}>1$ and $ \ga_0:=\sup_{t\ge t^*} \frac{p(t)}{\de(t)}$
% for some $t\ge t^*$, and note that
%$$ \frac{h(t,u,m)}{\de(t)u}= \frac{\sum_jp_j(t)e^{-a_j(t)m}}{\de(t)}
%\ge \frac{\sum_jp_j(t)e^{-a_j^+m}}{\de(t)}
%\ge e^{-a^+m}\al_0\to \al_0>1\q {\rm as}\q m\to 0$$
%and 
%$$ \frac{h(t,u,M)}{\de(t)u}= \frac{\sum_jp_j(t)e^{-a_j(t)M}}{\de(t)}
%\le \frac{\sum_jp_j(t)e^{-a_j^-M}}{\de(t)}\le e^{-a^-M}\ga_0\to 0\q {\rm as}\q M\to \infty.$$
%Hence, there are  $m_0,M_0>0$ such that \begin{equation}\label{mM}
% \frac{h(t,u,m)}{\de(t)u}\ge \al_1>1\, ,\q \frac{h(t,u,M)}{\de(t)u}\le \ga_1 <1 \ ,\q t\ge t^*,
%    \end{equation}
% for $ 0<m\le m_0$ and $M\ge M_0$.  From \cite[Theorems 4.1 and 5.2]{bb2016}, this implies that there is no solution $x(t)$ of \eqref{general}-\eqref{IC} with either $\lim_{t\to \infty} x(t)=0$ or 
% $\lim_{t\to \infty} x(t)=\infty$.
% 
%  \smal

 \smal
{\it Step 1}. 
    Consider any positive solution $x(t)$. %which is not eventually monotone.
  In order to prove its permanence, some a priori estimates are established by  adapting some arguments in \cite[Theorem 5.6]{bb2016}.

 For any $\vare >0$ fixed, we may suppose that $t^*$ as above  was chosen so that the inequalities
 \begin{equation}\label{CC}
 C_1:=
 \sup_{t\ge t^*} \int_{t-\tau}^t \de (s)\, ds<D+\vare,\q C_2:=\sup_{t\ge t^*} \int_{t-\tau}^t p(s)\, ds<P+\vare.
     \end{equation}
  also hold. Define $t_0=t^*, t_j=t^*+j\tau, I_j=[t_{j-1},t_j]$ and
$$m_j=\min_{I_j}x(t),\q M_j=\max_{I_j}x(t),\q j\in\N.$$
For each $j\ge 2$, set $t_{j-1}^*\in I_{j-1}$ such that $x(t_{j-1}^*)=M_{j-1}$. Since $x'(t)\ge -\de(t) x(t)$,  we obtain
$$x(t)\ge  x(t_{j-1}^*)e^{- \int_{t_{j-1}^*}^t\de (s)\, ds},\q t\ge t_{j-1}^*,$$
thus 
\begin{equation}\label{m_j}
m_j\ge M_{j-1}e^{-2 C_1},\q t\in I_j.
\end{equation}
Write again  $h_k(t, u, v)=p_k (t) u e^{-a_k(t) v},\, 1\le k\le m$.
Since $x(t_{j-1})\le M_{j-1}$ and $x'(t)\le \sum_kh_k(t,x(t-\tau_k(t)),x(t-\sigma_k(t)))\le\sum_kp_k(t)x(t-\tau_k(t))$,  we have
\begin{equation}\label{x'}
 x'(t)\le p(t)\max \{M_{j-1}, \max_{s\in [t_{j-1},t]}x(s)\},\qq  t\ge t_{j-1}.
    \end{equation}
    Now,  for  $t\ge t_{j-1}$, we compare $x(t)$ with the solution of the ODE $y'(t)=p(t)y(t)$ with initial condition $y(t_{j-1})=M_{j-1}$, given by $y(t)=M_{j-1}e^{\int_{t_{j-1}}^t p(s)\, ds}$. From \eqref{CC} and \eqref{x'}, it follows
    $$x(t)\le y(t)\le M_{j-1}e^{C_2},\q t\in I_j,$$
    leading to
    \begin{equation}\label{M_j}
M_j\le M_{j-1}e^{C_2}
\end{equation}
and
\begin{equation}\label{x}x(t)\le M_{j-1}e^{2C_2},\qq t\in I_{j+1}.\end{equation}

 Step 2: The permanence of the solution $x(t)$ is now established.
   
       Choose any $m,M>0$ such that conditions \eqref{mM} are satisfied. For the sake of contradiction, assume that $x(t)$ is not bounded. Thus, for any $\ol M>Me^{2(C_1+C_2)}$ and $ \ol M>\max_{t\in I_1}x(t)$, there exists an interval $I_{j+1}\setminus \{t_j\}=]t_j,t_{j+1}]$ and $T^*\in ]t_j,t_{j+1}]$ such that $x(T^*)=\ol M =\max_{t^*\le s\le T^*}x(s)$. From \eqref{x},  we have
    $\ol M=x(T^*)\le M_{j-1} e^{2C_2}$, hence 
    $$M_{j-1}>Me^{2C_1}.$$
    From \eqref{m_j}, it then follows that $m_j>M$. Together with \eqref{M_j}, \eqref{x} and $\ol M\le M_{j+1}$,  we get $$m_{j+1}\ge M_je^{-2C_1}\ge M_{j+1}e^{-(2 C_1+C_2)}\ge \ol Me^{-(2 C_1+C_2)}>M,$$
   thus $x(t)>M$ for all $t\in [t_{j-1},T^*]$. From \eqref{general}, for $t=T^*$
   we get $x(T^*-\sigma_k(T^*))>M\, (1\le k\le m)$, hence
    \begin{equation*}
     \begin{split}
     x'(T^*)&< \sum_kp_k(T^*)x(T^*-\tau_k(T^*))e^{-a_k (T^*)M}-\de x(T^*)\\
     &\le \de(T^*)x(T^*)\left (\frac{\sum_kp_k(T^*)e^{-a_k(T^*)M}}{\de(T^*)}-1\right)<0,
     \end{split}
   \end{equation*}
 which contradicts the definition of $T^*$. Thus $x(t)$ is bounded.
 %, with $\limsup_{t\to \infty}x(t)\le Me^{2C_1(\de+p)}$, for any $M$ as in \eqref{mM}.

 We now show the persistence of $x(t)$.
 Consider $m_j,M_j$ as in  the construction above. If $\liminf_{t\to\infty}x(t)=0$, we may consider $\ol m<me^{-2 C_1}$ such that $ \ol m<\min_{t\in I_1}x(t)$ and $S^*\in I_{j+1}\setminus \{t_j\}=]t_j,t_{j+1}]$ so that
 $$x(S^*)=\min_{t^*\le t\le S^*}x(t)=\ol m.$$
 From \eqref{m_j}, $M_j\le e^{2C_1}m_{j+1}\le e^{2C_1}\ol m<m$. On the other hand, for $t\in [t_j,S^*)$, $\ol m=x(S^*)\ge x(t)e^{-C_1}$, thus $x(t)\le \ol m e^{ C_1}<m$.
  In this way,
 we conclude that $x(t)< m$ on $[t_{j-1},S^*]$, which implies
 \begin{equation*}
     \begin{split}
     x'(S^*)&>\sum_kp_k(S^*)x(S^*-\tau_k(S^*))e^{-a_k(S^*) m}-\de x(S^*)\\
     &\ge \de (S^*)x(S^*)\left (\frac{\sum_kp_k(S^*)e^{-a_k(S^*)m}}{\de(S^*)}-1\right)>0.
     \end{split}
   \end{equation*}
   This is not possible, since $x'(S^*)\le 0$ by definition of $S^*$. Thus $x(t)$ is persistent as well.
   
   \smal
   
  Next, we suppose that  (A3) is satisfied and prove the  permanence of \eqref{general} -- with  uniform bounds as  in \eqref{boundsmM} -- in two further steps, by analysing separately solutions which converge at $\infty$ and solutions with an oscillatory behaviour.

   \smal

{\it Step 3}.
   If $x(t)$ is a positive solution of \eqref{general} such that
 there exists $L=\lim_{t\to \infty}x(t)$, then $L\in (0,\infty)$ from  Lemma \ref{lem2.1}. Fix $\vare\in (0,1)$, choose $t^*$ above such that
  $\al_0\ge \al-\vare >1, \ga_0\le \ga+\vare$,
and observe that, for some $ t_*\ge t^*$ sufficiently large and $t\ge t_*$, we have
$$x'(t)+\delta(t) x(t)\le \ga_0 \de(t)(L+\vare) e^{-a^-(L-\vare)}$$
 and 
  $$x'(t)+\delta(t) x(t)\ge \al_0 \de(t)(L-\vare) e^{-a^+(L+\vare)}.$$
  Integrating over $[t_*,t]$, we obtain
  $$x(t)\le x(t_*)e^{-\int_{t_*}^t\delta (s)\, ds}+\ga_0 (L+\vare)e^{-a^-(L-\vare)} [1-e^{-\int_{t_*}^t\delta (s)\, ds}]$$
  and 
  $$x(t)\ge x(t_*)e^{-\int_{t_*}^t\delta (s)\, ds}+\al_0 (L-\vare)e^{-a^+(L+\vare)} [1-e^{-\int_{t_*}^t\delta (s)\, ds}].$$ 
  By letting $t\to\infty,\vare \to 0^+ $, (A3) allows to conclude that
  \begin{equation}\label{Bounds_monotone}\frac{1}{a^+}\log \al\le L\le \frac{1}{a^-}\log \ga.
  \end{equation}
  In particular, $m\le L\le M$ for $m,M$ as in \eqref{boundsmM}.

   \med

     %Finally, we prove the  permanence of \eqref{general}, with the uniform bounds in \eqref{boundsmM}.
   
 Step 4:    From Step 2, consider a positive solution $x(t)$ and define    
          \begin{equation}\label{lL}
          l=\liminf_{t\to\infty}x(t),\q L=\limsup_{t\to\infty}x(t),\end{equation}
 with $0<l\le L<\infty$. In virtue of Step 3, it suffices to consider the case  $l<L$. Hence, by the fluctuation lemma (see \cite{smith} there is a sequence $(t_n)$ of local maxima points with $x(t_n)\to L, x'(t_n)=0$ and a sequence of  local minima points $(s_n)$  with $x(s_n)\to l, x'(s_n)=0$.
 
 For any $\vare>0$, let $t^*$  be such that 
  \eqref{mM} and  \eqref{CC} hold. From the construction in Step 1, clearly $\limsup_jM_j=L, \liminf_j m_j=l$. 
 
Let $x(t_n)=M_{j_n}$ for some subsequence $(M_{j_n})_{n\in\N}$ of $(M_j)_{j\in\N}$ with $\lim_n M_{j_n}=L$.
 From Step 2, $\min \{m_j,m_{j-1}\}\ge M_je^{-2(C_1+C_2)}$.  Since $x'(t_n)=0$, from \eqref{general}, for $n$ large we obtain
\begin{equation}\label{Step4.1}
\begin{split}
0&\le-\de(t_n) x(t_n)+\sum_k p_k(t_n) x(t_n-\tau(t_n))e^{-a_k(t_n) M_{j_n}e^{-2(C_1+C_2)}}\\
&\le -\de(t_n) x(t_n)+(L+\vare)\sum_k p_k(t_n) e^{-a_k(t_n) M_{j_n}e^{-2(C_1+C_2)}},
\end{split}
\end{equation} 
thus $$0\le -x(t_n)+(L+\vare)\ga_1 e^{-a^- M_{j_n}e^{-2(C_1+C_2)}}.$$
 By letting $\vare\to 0, n\to\infty$, we derive $L \le L \ga e^{-a^- Le^{-2(D+P)}}$, or in other words, 
 $$L\le \frac{\log \ga}{a^-}e^{2(D+P)}.$$
 
 Similarly, let $x(s_n)=m_{j_n}$ for some subsequence $(m_{j_n})_{n\in\N}$ of $(m_j)_{j\in\N}$ with $\lim_n m_{j_n}=l $ and, from Step 3,  $\max_{t\in [t_{j_n-2}, s_n]}x(t)\le 
 \max\{M_j,M_{j-1}\}\le m_je^{2C_1+C_2}$. 
 Thus, for $n$ large, 
\begin{equation}\label{Step4.2}
\begin{split}
0&\ge-\de(s_n) x(s_n)+\sum_k p_k(s_n) x(s_n-\tau(s_n))e^{-a_k(s_n) m_{j_n}e^{2 C_1+C_2}}\\
&\ge \de(s_n)\left (- x(s_n)+ (l-\vare)\frac{p(s_n)}{\de(s_n)}e^{-a^+ m_{j_n}e^{2C_1+C_2}}\right),
\end{split}
\end{equation} 
and taking limits we conclude that $1  \ge \al e^{-a^+ le^{2D}}$, which means that
 $$l\ge  \frac{\log \al }{a^+}e^{-(2D+P)}.$$
     The proof is complete.
\end{proof}

\begin{rmk}\label{rmk2.1}  In the proof of Theorem \ref{thm2.1}, note that hypothesis (A3) is relevant only to deal with positive solutions for which there exists $\lim_{t\to \infty} x(t)$. On the other hand, assuming  (A0)--(A3), the above proof  shows that if  there exists $\lim_{t\to \infty} x(t)$ for all positive solutions, then the estimates \eqref{boundsmM} are valid with $m=\frac{1}{a^+}\log \al, M= \frac{1}{a^-}\log \ga$.
\end{rmk}

\begin{rmk}\label{rmk2.1'} In \cite[Propositions 2.2 and 2.3]{ElRuiz22}, El-Morshedy and Ruiz-Herrera established sharper uniform lower and upper bounds $m,M$ than the ones  in  \eqref{boundsmM}, but only for some  families of DDEs with a single pair of delays in a mixed monotone nonlinearity. Moreover, all the coefficients were supposed to be bounded and bounded from below away from zero.
\end{rmk}

For the particular case of \eqref{eq:nicholson}, the estimates in the above Step 4 can be refined and we obtain:

\begin{thm} \label{thm2.1'}  For \eqref{eq:nicholson}, let $p:=\sum_{j=1}^{m} p_j  >\delta$ and  assume
\begin{itemize}
\item[(H0)]$\de, p_j , a_j>0$,  $\be(t)>0$ is continuous  and $ \sigma_j(t), \tau_j(t)\ge 0$ are continuous and bounded, $t\ge t_0$;
\item[(H1)] $\int_0^\infty \be(s)\, ds=\infty$;
\item[(H2)]  $\limsup_{t\to\infty} \int_{t-\tau}^t\beta(s)\, ds=: C<\infty .$
%\tag{H1}
%   \end{equation}
   \end{itemize}
   Then, \eqref{eq:nicholson} is  permanent. Moreover,
   any positive solution of \eqref{eq:nicholson}  satisfies
      \begin{equation}\label{perm}
Ke^{-(2\de+p) C} \le \liminf_{t\to \infty}x(t)\le \limsup_{t\to \infty}x(t) \le Ke^{2(\de+p)C},
\end{equation}
where $K$ is the positive equilibrium of  \eqref{eq:nicholson}.
% for any solution $x(t)$  with initial condition in $C_0^+$,
% where
%\begin{equation}\label{boundsmM}
% \mathfrak{m}= \frac{1}{a^+}\log (p/\de)e^{-2\de C},\q  \mathfrak{M}=\frac{1}{a^-}\log (p/\de)e^{2p C}
%\end{equation}
\end{thm}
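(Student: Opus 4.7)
The plan is to reduce to Theorem \ref{thm2.1} and then to sharpen the explicit bounds by exploiting the special structure of \eqref{eq:nicholson}. First, I would observe that \eqref{eq:nicholson} is exactly \eqref{general} with $\delta(t)=\beta(t)\delta$, $p_j(t)=\beta(t)p_j$ and $a_j(t)\equiv a_j$. Under (H0)--(H2), the hypotheses (A0)--(A3) are immediate; in the notation of \eqref{alphagamma} one has $\alpha=\gamma=p/\delta>1$, $D\le \delta C$ and $P\le pC$. Hence permanence already follows from Theorem \ref{thm2.1}, and the real content to establish is the sharper estimate \eqref{perm} involving the equilibrium $K$.

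To upgrade the bounds in \eqref{boundsmM}, I would revisit Step~4 of the proof of Theorem \ref{thm2.1}, but, crucially, without collapsing $\sum_k p_ke^{-a_ku}$ into $pe^{-a^{\pm}u}$: since here the ratios $p_k(t)/\delta(t)\equiv p_k/\delta$ and the coefficients $a_k(t)\equiv a_k$ are time-independent, the sum can be kept intact throughout. Set $l=\liminf_{t\to\infty}x(t)$ and $L=\limsup_{t\to\infty}x(t)$, which are finite and positive by what precedes. The case $l=L$ is handled directly: passing to the limit in \eqref{eq:nicholson} forces $\sum_kp_ke^{-a_kL}=\delta$, so $L=K$ by uniqueness of the positive equilibrium, and \eqref{perm} is trivial. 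Otherwise, pick sequences $(t_n)$ of local maxima with $x(t_n)\to L$ and $(s_n)$ of local minima with $x(s_n)\to l$, via the fluctuation lemma. Plugging $x'(t_n)=x'(s_n)=0$ into \eqref{eq:nicholson}, dividing by the positive factor $\beta$, and inserting the Step~1 bounds
\begin{equation*}
x(t_n-\sigma_k(t_n))\ge M_{j_n}e^{-2(C_1+C_2)},\qquad x(s_n-\sigma_k(s_n))\le m_{j_n}e^{2C_1+C_2},
\end{equation*}
where $C_1\le\delta C+\varepsilon$ and $C_2\le pC+\varepsilon$, the limits $n\to\infty$ and then $\varepsilon\to 0$ yield
\begin{equation*}
\delta\le\sum_{k}p_k\exp\bigl(-a_kLe^{-2(\delta+p)C}\bigr),\qquad \delta\ge\sum_{k}p_k\exp\bigl(-a_kle^{(2\delta+p)C}\bigr).
\end{equation*}
Since $u\mapsto\sum_kp_ke^{-a_ku}$ is strictly decreasing and equals $\delta$ exactly at $u=K$, the first inequality forces $Le^{-2(\delta+p)C}\le K$ and the second $le^{(2\delta+p)C}\ge K$, which is precisely \eqref{perm}.

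The main technical point, more bookkeeping than genuine obstacle, is tracking the asymmetry of the two estimates: the lower bound on $x(t_n-\sigma_k(t_n))$ relative to the neighbouring maximum $M_{j_n}$ costs a factor $e^{-2(C_1+C_2)}$ (a ``maximum--minimum--maximum'' swing), whereas the upper bound on $x(s_n-\sigma_k(s_n))$ relative to $m_{j_n}$ costs only $e^{2C_1+C_2}$ (a single ``minimum--maximum'' swing). This asymmetry is what produces the mismatched exponentials $e^{2(\delta+p)C}$ versus $e^{-(2\delta+p)C}$ in \eqref{perm}; the rest is a clean specialization of Theorem \ref{thm2.1} in which the identity $\sum_kp_ke^{-a_kK}=\delta$ takes over the role played by $\tfrac{1}{a^\pm}\log(p/\delta)$ in the general case.
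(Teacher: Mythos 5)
Your proposal is correct and follows essentially the same route as the paper: reduce to Theorem \ref{thm2.1} for permanence, then redo Step~4 with the fluctuation-lemma sequences while keeping the sum $\sum_k p_k e^{-a_k u}$ intact, and use the strict monotonicity of $u\mapsto\sum_k p_k e^{-a_k u}$ together with its value $\delta$ at $u=K$ (the paper's function $f$ with $f(K)=1$) to convert the two limit inequalities into \eqref{perm}. The asymmetric swing bounds $M_{j_n}e^{-2(C_1+C_2)}$ and $m_{j_n}e^{2C_1+C_2}$ you track are exactly those used in the paper's proof.
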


\begin{proof}  Define \begin{equation}\label{f0}f(x)=\frac{1}{\de} (\sum_{j=1}^m p_j  e^{-a_j x}),\ x\ge 0,\end{equation}
and observe that $f$ is decreasing and $f(K)=1$. 

%with the uniform bounds in \eqref{perm}.
 % As mentioned previously, we only need to consider solutions $x(t)$ with an oscillatory behaviour,  thus we suppose that $l<L$, where, from the construction in the previous proof, 
 
Fix  a positive solution $x(t)$ and define    $l,L$ as in \eqref{lL}.
        %  $$l=\liminf_{t\to\infty}x(t),\q L=\limsup_{t\to\infty}x(t),$$
 From Theorem \ref{thm2.1}, $0<l\le L<\infty$.  If $l=L$, we easily deduce that $L=K$ (see also Lemma \ref{lem2.2} and Remark \ref{rmk2.3}).
 Thus, we only need to consider solutions $x(t)$ with an oscillatory behaviour,  i.e.,  we suppose that $l<L$. 
 For any $\vare>0$, let $t^*$  be such that \eqref{CC} holds, where $D=\de C, P=pC$. Write $C_\vare=C+\vare$.
Consider the construction and notation in the proof of Theorem \ref{thm2.1}, so that $\limsup_jM_j=L, \liminf_j m_j=l$ and, 
as before, take sequences $(t_n)$ of local maxima points with $x(t_n)\to L, x'(t_n)=0$ and  of  local minima points $(s_n)$  with $x(s_n)\to l, x'(s_n)=0$.

    Let $x(t_n)=M_{j_n}$ for some subsequence $(M_{j_n})_{n\in\N}$ of $(M_j)_{j\in\N}$.
 %As observed, from Step 2 $\min \{m_j,m_{j-1}\}\ge M_je^{-2(\de+p)C_1}$.  
 Reasoning as in \eqref{Step4.1}, from  \eqref{eq:nicholson} and $0=x'(t_n)$, for $n$ large we obtain
\[
\begin{split}
0&\le-\de x(t_n)+\sum_k p_k x(t_n-\tau(t_n))e^{-a_k M_{j_n}e^{-2(\de+p)C_\vare}}\\
&\le -\de x(t_n)+(L+\vare)\sum_k p_k e^{-a_k M_{j_n}e^{-2(\de+p)C_\vare}}
\end{split}
\] 
 By letting $\vare\to 0, n\to\infty$, we derive $\de L \le L \sum_k p_k e^{-a_k Le^{-2(\de+p)C}}$, or in other words, $f(Le^{-2(\de+p)C})\ge 1$.
 Therefore
 $L\le Ke^{2(\de+p)C}.$
In a similar way, let $x(s_n)=m_{j_n}$ for some subsequence $(m_{j_n})_{n\in\N}$ of $(m_j)_{j\in\N}$. Arguing as in \eqref{Step4.2}, for $n$ large we derive
 % and, from Step 3,  $\max_{t\in [t_{j_n-1}, s_n]}\le m_{j_n}e^{2\de C_1}$.
 %\{M_j,M_{j-1}\}\le m_je^{(2\de+p)C_1}$. 
 %Thus, for $n$ large, 
 \[
\begin{split}
0&\ge-\de x(s_n)+\sum_k p_k x(s_n-\tau(s_n))e^{-a_k m_{j_n}e^{(2\de+p) C_\vare}}\\
&\ge -\de x(s_n)+ (l-\vare)\sum_k p_ke^{-a_k m_{j_n}e^{(2\de+p) C_\vare}},
\end{split}
\] 
and taking limits we conclude that $1  \ge f(le^{(2\de+p) C})$, which means that
 $l\ge Ke^{-(2\de+p) C}.$ \end{proof}

%\begin{proof} Suppose that $L=\lim_{t\to \infty}x(t)$. From  Lemma \ref{lem2.1}, $L\in (0,\infty)$.
% Define the function $f(x)$ by \eqref{f}. 
%%  \begin{equation}\label{f}f(x)=\frac{1}{\de} (\sum_{j=1}^m p_j  e^{-a_j x}),\ x\ge 0,\end{equation}
%%and observe that $f$ is decreasing and $f(K)=1$. 
% We have
%$$-\delta x(t)+\sum_{j=1}^{m} p_j  x(t-\tau_j(t)) e^ {-a_j x(t-\sigma_j(t))}\to \eta:=\de L[-1+f(L)]\q {\rm as}\q t\to\infty.$$
%If $L>K$, then $\eta<0$ and for $t\gg 1$
%$$-\delta x(t)+\sum_{j=1}^{m} p_j  x(t-\tau_j(t)) e^ {-a_j x(t-\sigma_j(t))}<\eta/2.$$
%In particular,  for some $T\ge 0$ and $t\ge T$, $x'(t)\le \be(t)\eta/2$, implying that
%$$x(t)\le x(T)+\frac{\eta}{2}\int_T^t\be(s)\, ds\to -\infty \q {\rm as}\q t\to\infty,$$
%which is not possible. If $L>K$, then
%$\eta>0$, and a similar contradiction is obtained.\end{proof}
%
%----

\section{Preliminary  results}
\setcounter{equation}{0}
Once the permanence of \eqref{general} is established, the aim now is to give criteria for its global attractivity.   
In this section, some preliminary lemmas are proven. A first important observation is stated in the lemma below, whose proof is trivial and therefore omitted.

\begin{lem} \label{lem3.1} Assume (A0). If $x^*(t)$ is a fixed positive solution of
 \eqref{general}, then the  change of variables
\begin{equation}\label{y}y(t)=\frac{x(t)}{x^*(t)}\end{equation}
 transforms \eqref{general} into
 \begin{equation}\label{general2}
    y'(t)=\sum_{j=1}^{m} P_j(t)  y(t-\tau_j(t)) e^ {-A_j(t) y(t-\sigma_j(t))}-D(t) y(t),\qq t\ge 0,
\end{equation}
where
\begin{equation}\label{coefNew}
\begin{split}
 &P_j(t)=\frac{1}{x^*(t)}p_j(t)x^*(t-\tau_j(t)),\\
  &A_j(t)=a_j(t)x^*(t-\sigma_j(t)),\q j=1,\dots,m,\\
 &D(t)=\sum_j P_j(t)e^{-A_j(t)}.
 \end{split}
\end{equation}
\end{lem}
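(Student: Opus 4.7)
The plan is a direct computation using the product rule. Starting from $x(t)=y(t)x^*(t)$, I would differentiate to obtain $x'(t)=y'(t)x^*(t)+y(t)(x^*)'(t)$, substitute this into \eqref{general}, and then use the fact that $x^*(t)$ itself satisfies \eqref{general}. The strategy to simplify cleanly is not to expand $(x^*)'(t)$ directly, but rather to divide the resulting identity by $x^*(t)$ and replace $(x^*)'(t)/x^*(t)$ using the logarithmic form of the equation for $x^*$.

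Concretely, dividing \eqref{general} (applied to $x^*$) by $x^*(t)$ yields
\begin{equation*}
\frac{(x^*)'(t)}{x^*(t)}=\sum_{j=1}^m\frac{p_j(t)x^*(t-\tau_j(t))}{x^*(t)}e^{-a_j(t)x^*(t-\sigma_j(t))}-\delta(t)=D(t)-\delta(t),
\end{equation*}
with $P_j(t),A_j(t),D(t)$ as in \eqref{coefNew}. On the other hand, substituting $x(t)=y(t)x^*(t)$ into \eqref{general}, noting that $x(t-\sigma_j(t))=y(t-\sigma_j(t))x^*(t-\sigma_j(t))$ so $a_j(t)x(t-\sigma_j(t))=A_j(t)y(t-\sigma_j(t))$, and factoring $x^*(t-\tau_j(t))/x^*(t)=P_j(t)/p_j(t)$ in the sum, I would obtain after dividing by $x^*(t)$
\begin{equation*}
y'(t)+y(t)\frac{(x^*)'(t)}{x^*(t)}=\sum_{j=1}^m P_j(t)y(t-\tau_j(t))e^{-A_j(t)y(t-\sigma_j(t))}-\delta(t)y(t).
\end{equation*}
Substituting the expression for $(x^*)'(t)/x^*(t)$ above, the $\delta(t)y(t)$ terms cancel and the remaining $D(t)y(t)$ term moves to the right-hand side, producing exactly \eqref{general2}.

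There is essentially no obstacle; the only things to be careful about are (i) the legitimacy of dividing by $x^*(t)$, which is guaranteed because $x^*(t)>0$ on $[t_0,\infty)$ by Lemma \ref{lem2.0}, and (ii) bookkeeping on the delayed arguments, making sure that the factor $x^*(t-\tau_j(t))$ coming from $x(t-\tau_j(t))=y(t-\tau_j(t))x^*(t-\tau_j(t))$ is absorbed into $P_j(t)$ while $x^*(t-\sigma_j(t))$ coming from the exponent is absorbed into $A_j(t)$. This indexing distinction between the $\tau_j$-delay and the $\sigma_j$-delay is the only place where a slip would matter, but once the definitions in \eqref{coefNew} are written down the identification is immediate.
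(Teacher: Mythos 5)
Your computation is correct: substituting $x(t)=y(t)x^*(t)$, dividing by $x^*(t)>0$, and eliminating $(x^*)'(t)/x^*(t)=D(t)-\delta(t)$ via the equation satisfied by $x^*$ yields \eqref{general2} exactly, with the delayed factors correctly absorbed into $P_j(t)$ and $A_j(t)$. The paper omits this proof as trivial, and your argument is precisely the direct verification that is intended.
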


We observe that the change  \eqref{y}, already considered in \cite{FariaRost,Faria21b}, leads to  equation
\eqref{general2} which has the some form of \eqref{general} and possesses the positive equilibrium $K=1$. Henceforth, the main strategy  is first to establish sufficient conditions for the global attractivity of a positive equilibrium $K$ when such an equilibrium exists; then we will drop such an assumption, and proceed to derive a major result for global attractivity of positive solutions of \eqref{general}  by applying Lemma \ref{lem3.1}.

Consequently, at the present moment the following auxiliary temporary assumption is imposed: 
\begin{itemize}
\item[(K1)] There is a positive equilibrium $K$ of  \eqref{general}, i.e., there is $K>0$ such that
$$\de(t)=\sum_jp_j(t)e^{-a_j(t)K}\q {\rm for}\q t\ge 0.$$
\end{itemize}

\begin{lem} \label{lem2.2} Assume (A0), (A1), (A3) and (K1). If $x(t)$ is a positive solution of
 \eqref{general} with  $ \lim_{t\to \infty}x(t)=L$, then $L=K$.\end{lem}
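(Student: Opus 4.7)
The plan is to argue by contradiction, using the variation-of-constants formula together with hypothesis (K1). By Lemma \ref{lem2.1}, under (A0)--(A1) a positive solution cannot converge to $0$ or $\infty$, so the assumption $x(t)\to L$ already forces $L\in(0,\infty)$; it remains only to rule out $L\ne K$.

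First I would fix a small $\epsilon>0$ and, using $x(t)\to L$ together with the boundedness of $\tau_j,\sigma_j$ from (A0), pick $T$ so that $x(t-\tau_j(t)),x(t-\sigma_j(t))\in[L-\epsilon,L+\epsilon]$ for every $t\ge T$ and every $j$. Monotonicity of each $u\mapsto ue^{-a_j(t)v}$ in $u$ (increasing) and in $v$ (decreasing) then yields, after substitution into \eqref{general}, the two-sided bound
\begin{equation*}
(L-\epsilon)\sum_j p_j(t)e^{-a_j(t)(L+\epsilon)}\le x'(t)+\delta(t)x(t)\le (L+\epsilon)\sum_j p_j(t)e^{-a_j(t)(L-\epsilon)}.
\end{equation*}

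The key step is to factor out $\delta(t)$ via (K1). Writing $e^{-a_j(t)(L\pm\epsilon)}=e^{-a_j(t)K}\,e^{\mp a_j(t)(L\pm\epsilon-K)}$ and using $a_j(t)\ge a^->0$ together with $\delta(t)=\sum_j p_j(t)e^{-a_j(t)K}$, the nonautonomous sums are sandwiched by explicit multiples of $\delta(t)$. Concretely, if $L>K$ and $\epsilon$ is small enough that $L-\epsilon>K$, the upper inequality becomes $x'(t)+\delta(t)x(t)\le (L+\epsilon)e^{-a^-(L-\epsilon-K)}\delta(t)$. Variation of constants on $[T,t]$ gives
\begin{equation*}
x(t)\le x(T)e^{-\int_T^t\delta(s)\,ds}+(L+\epsilon)e^{-a^-(L-\epsilon-K)}\bigl[1-e^{-\int_T^t\delta(s)\,ds}\bigr],
\end{equation*}
and (A3) sends the exponentials to $0$ as $t\to\infty$, yielding $L\le (L+\epsilon)e^{-a^-(L-\epsilon-K)}$. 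Letting $\epsilon\to 0^+$ forces $L\le L e^{-a^-(L-K)}<L$, a contradiction since $L>0$. The case $L<K$ is entirely symmetric: with $\epsilon$ small enough that $L+\epsilon<K$, the lower bound leads to $L\ge L e^{a^-(K-L)}>L$, again impossible. Hence $L=K$.

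The main obstacle is that $p_j(t)$ and $\delta(t)$ need not be bounded individually (neither above nor away from zero), so one cannot isolate the coefficients; the remedy is precisely the (K1)-factorization just described, which collapses the nonautonomous sum of exponentials into a controllable scalar multiple of $\delta(t)$, after which (A3) closes the argument via standard variation-of-constants estimates.
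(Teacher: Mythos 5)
Your proof is correct and follows essentially the same route as the paper: both hinge on the (K1) factorization $\sum_j p_j(t)e^{-a_j(t)(L\pm\epsilon)}=\delta(t)\cdot(\text{controlled factor})$ followed by an integration that exploits (A3). The only cosmetic difference is that the paper keeps $-\delta(t)x(t)$ on the right and integrates $x'(t)\le c\,\delta(t)$ with $c<0$ directly to force $x(t)\to-\infty$, whereas you use the integrating factor $e^{\int\delta}$ (exactly as the paper itself does in Step 3 of Theorem \ref{thm2.1}); both close the argument identically.
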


\begin{proof} Suppose that $L=\lim_{t\to \infty}x(t)$. From  Lemma \ref{lem2.1}, $L\in (0,\infty)$.
In order to obtain a contradiction, suppose that $L>K$.
%  \begin{equation}\label{f}f(x)=\frac{1}{\de} (\sum_{j=1}^m p_j  e^{-a_j x}),\ x\ge 0,\end{equation}
%and observe that $f$ is decreasing and $f(K)=1$. 

Then, there is $\vare>0$ such that
$x(t-\sigma_j(t))-K\ge \vare$  for $t\gg 1$. Let $\eta<0$ be given by $\eta=-1+e^{-a^-\vare}$.
Using (H4), for $t\gg 1$ we obtain
\begin{equation*}
\begin{split}
x'(t)&=\sum_jp_j(t)e^{-a_j(t)K}\left [-x(t)+x(t-\tau_j(t)) e^ {-a_j(t) [x(t-\sigma_j(t))-K]}\right]\\
&\le \sum_jp_j(t)e^{-a_j(t)K}\left [-x(t)+x(t-\tau_j(t)) e^ {-a^- \vare}\right]\\
&\le \sum_jp_j(t)e^{-a_j(t)K}L \frac{\eta}{2}=L \frac{\eta}{2}\de(t).
\end{split}
    \end{equation*}    
%    ince 4\eta>0$ and 
%
%
% We have
%$$-\delta x(t)+\sum_{j=1}^{m} p_j  x(t-\tau_j(t)) e^ {-a_j x(t-\sigma_j(t))}\to \eta:=\de L[-1+f(L)]\q {\rm as}\q t\to\infty.$$
%If $L>K$, then $\eta<0$ and for $t\gg 1$
%$$-\delta x(t)+\sum_{j=1}^{m} p_j  x(t-\tau_j(t)) e^ {-a_j x(t-\sigma_j(t))}<\eta/2.$$
Hence,  for some $T\ge 0$ and $t\ge T$, (A3) yields
$$x(t)\le x(T)+L\frac{\eta}{2}\int_T^t\de(s)\, ds\to -\infty \q {\rm as}\q t\to\infty,$$
which is not possible. If $L<K$,  a similar contradiction is obtained.\end{proof}

\begin{rmk}\label{rmk2.3}In particular for  \eqref{eq:nicholson} with $p>\delta$ and $\int_{0}^\infty \beta(t)\, dt=\infty$, we conclude that for any positive solution $x(t)$ such that there exists  $ \lim_{t\to \infty}x(t)=L$, then $L=K$.
\end{rmk}

The next lemma plays a key role in the proof of our main result, presented in Section 4.

\begin{lem} \label{lem2.3} Assume (A0),  (K1) and 
%that there is $t_0>0$ such that  
\begin{equation}\label{h0}
\begin{split}
%&\int_{t_0}^\infty \beta(t)\, dt=\infty,\\
&\sup_{t\ge 0} \int_{t-\sigma_j(t)}^t p(s)\, ds<\infty, \ j=1,\dots,m.
 \end{split}
% \qq{\rm where}\qq \beta^-,\beta^+>0
    \end{equation}
    For   a positive solution $x(t)$ of
 \eqref{general},  let
 $ l= \liminf_{t\to \infty}x(t),\ L= \limsup_{t\to \infty}x(t)$
as in \eqref{lL}. If  $0<l<L<\infty$, then $l<K<L$.
\end{lem}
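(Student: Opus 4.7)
The plan is to use the fluctuation lemma to extract extremizing sequences for $x$ and then evaluate the equation at those points, with (K1) providing the right normalization to compare $l$ and $L$ with $K$. Since (eventually) $0<l-\varepsilon\le x(t)\le L+\varepsilon<\infty$, the fluctuation lemma yields sequences $t_n\to\infty$ of local maxima with $x(t_n)\to L$ and $x'(t_n)=0$, and $s_n\to\infty$ of local minima with $x(s_n)\to l$ and $x'(s_n)=0$.

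Writing (K1) as $\delta(t)=\sum_j p_j(t)e^{-a_j(t)K}$ and defining the convex weights $w_j(t):=p_j(t)e^{-a_j(t)K}/\delta(t)$, so that $\sum_j w_j(t)=1$, the identity $x'(t_n)=0$ becomes
\[
x(t_n)=\sum_{j=1}^m w_j(t_n)\,x(t_n-\tau_j(t_n))\,e^{-a_j(t_n)\,[x(t_n-\sigma_j(t_n))-K]}.
\]
Using $x\le L+\varepsilon$ in the $\tau_j$-argument and $x\ge l-\varepsilon$ in the $\sigma_j$-argument, the right-hand side is bounded above by $(L+\varepsilon)\sum_j w_j(t_n)e^{a_j(t_n)(K-l+\varepsilon)}$. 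If $l>K$, then $K-l+\varepsilon<0$ for small $\varepsilon$ and, since $e^{ax}$ is decreasing in $a$ for $x<0$, each summand is at most $e^{a^-(K-l+\varepsilon)}<1$; passing to the limit gives $L\le L\,e^{a^-(K-l)}<L$, a contradiction. Hence $l\le K$. The symmetric computation at $s_n$, using the lower bound $x(s_n)\ge(l-\varepsilon)\sum_j w_j(s_n)e^{-a_j(s_n)(L-K+\varepsilon)}$ and the dual fact that $e^{ax}$ is increasing in $a$ for $x>0$, gives $K\le L$.

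To upgrade to strict inequalities, I argue by contradiction: suppose $l=K$. By compactness extract subsequences along which $w_j(t_n)\to w_j^*$, $a_j(t_n)\to a_j^*$, $x(t_n-\tau_j(t_n))\to u_j\in[K,L]$, $x(t_n-\sigma_j(t_n))\to v_j\in[K,L]$. The limit of the normalized identity reads $L=\sum_j w_j^* u_j e^{-a_j^*(v_j-K)}$, and since $u_j\le L$ and $v_j\ge K$ already give each summand $\le L$, equality forces $u_j=L$ and $v_j=K$ for every $j$ with $w_j^*>0$. Thus, inside each window $[t_n-\tau,t_n]$, the solution must complete a full excursion from $\approx K$ at $t_n-\sigma_j(t_n)$ up to $\approx L$ at $t_n-\tau_j(t_n)$ and at $t_n$. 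The hard part will be to rule this out quantitatively: the plan is to combine hypothesis \eqref{h0} with the inequality $x(s)\le x(t)\exp\bigl(\int_s^t\delta(r)\,dr\bigr)$, valid for $s\le t$ because $x'+\delta x\ge 0$, and the bound $\delta(t)\le p(t)$, which is immediate from (K1) since each $e^{-a_j(t)K}<1$, so as to quantitatively control the excursion of $x$ on a $\sigma_j$-window and obtain a contradiction with the required jump from $K$ to $L$. The strict inequality $K<L$ is then obtained by the symmetric argument applied at the minima $s_n$.
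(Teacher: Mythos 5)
Your derivation of the weak inequalities $l\le K\le L$ is correct and follows essentially the same route as the paper's first step: evaluate the equation at the extremizing sequences from the fluctuation lemma and use (K1) as the normalization. The observation that equality, say $l=K<L$, forces $x(t_n-\tau_j(t_n))\to L$ and $x(t_n-\sigma_j(t_n))\to K$ along the maximizing sequence is also sound. But the decisive step --- actually ruling out this configuration --- is the whole content of the lemma beyond the easy part, and you leave it as a ``plan''. Moreover, the tools you name would not close it as stated: the estimate $x(s)\le x(t)\exp\bigl(\int_s^t\delta(r)\,dr\bigr)$ only says $x$ cannot decay too fast, so in the limit it yields $K\le Le^{\zeta}$, which is no contradiction; and nothing in your sketch prevents $x$ from rising from $\approx K$ at $t_n-\sigma_j(t_n)$ to $\approx L$ at $t_n$, since the crude growth bound $x'\le p(t)\cdot(\text{running max})$ is fed by delayed values that may already be near $L$.

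What is needed (and what the paper does) is the variation-of-constants representation of $x(t_n)$ over the window $[t_n-\sigma_i(t_n),t_n]$:
\begin{equation*}
x(t_n)=x(t_n-\sigma_i(t_n))\,e^{-\int_{t_n-\sigma_i(t_n)}^{t_n}\delta(v)\,dv}+\sum_j\int_{t_n-\sigma_i(t_n)}^{t_n}p_j(s)\,x(s-\tau_j(s))\,e^{-a_j(s)x(s-\sigma_j(s))}\,e^{\int_{t_n}^{s}\delta(v)\,dv}\,ds.
\end{equation*}
When $l=K$, eventually $x(\cdot)\ge K-\varepsilon$, so by (K1) the integrand is at most $(L+\varepsilon)e^{a^+\varepsilon}\,\delta(s)\,e^{\int_{t_n}^{s}\delta(v)\,dv}$ and the integral evaluates to $(L+\varepsilon)e^{a^+\varepsilon}\bigl(1-e^{-\zeta_n}\bigr)$ with $\zeta_n=\int_{t_n-\sigma_i(t_n)}^{t_n}\delta\le\int_{t_n-\sigma_i(t_n)}^{t_n}p$ bounded by \eqref{h0} and $\delta\le p$. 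Passing to the limit gives $L\le Ke^{-\zeta_i^*}+L(1-e^{-\zeta_i^*})$ with $\zeta_i^*<\infty$, hence $L\le K$, the desired contradiction; the finiteness of $\zeta_i^*$ is precisely where \eqref{h0} enters. (The paper organizes this through the mean value theorem for integrals and two intermediate claims, but this integral estimate is the mechanism.) Until you carry out this computation, the proof has a genuine gap at its central point.
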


\begin{proof} Let  $0<l<L<\infty$. We proceed along the major lines of the proofs  of \cite[Theorems 3.1 and 3.3]{FariaPrates}, however additional arguments are required. 
Define \begin{equation}\label{f}f(t,x)=\frac{1}{\de(t)} (\sum_{j=1}^m p_j (t) e^{-a_j(t) x}),\ t,x\ge 0.\end{equation}
From (K1),  $f(t,K)=1$ for all $t\ge 0$. 
%Moreover, for $\al,\ga$ as in \eqref{alphagamma} we have
%$\al\ge e^{a^-K}, \ga\le e^{a^+K}.$

%Some computational details will be omitted. 
Consider sequences $(t_n),(s_n)$ such that $t_n, s_n \to \infty $ and $x'(t_n)= x'(s_n)=0$, $ x(t_n)\to L, x(s_n)\to l.$ Taking  subsequences if needed, one may assume that, for $j=1,\dots,m$,
\begin{equation}\label{limitsj_h,g}
\begin{split}
    &x(t_n -\tau_j(t_n ))\to L_{\tau_j}, \  x(t_n -\sigma_j(t_n ))\to L_{\sigma_j} , \ \\
     &x(s_n -\tau_j(s_n ))\to l_{\tau_j}, \ x(s_n -\sigma_j(s_n ))\to l_{\sigma_j},\\
     &a_j(t_n)\to a_{j}^*,\ a_j(s_n)\to a_{j}^{**}
    \end{split}
\end{equation}
where $l_{\sigma_j},  l_{\tau_j},L_{\sigma_j},  L_{\tau_j}\in [l,L]$ and $0<a_j^-\le a_{j}^*,a_{j}^{**}\le a_j^+$ for all $j$.
From \eqref{general},
 \begin{equation*}
    \begin{split}&x'(t_n)=0=-x(t_n)+\sum_j\frac{p_j(t_n)}{\de(t_n)}e^{-a_j(t_n)K}x(t_n-\tau_j(t_n))e^{-a_j(t_n)[x(t_n-\sigma_j(t_n))-K]},\\
   &x'(s_n)=0=-x(s_n)+\sum_j\frac{p_j(s_n)}{\de(s_n)}e^{-a_j(s_n)K}x(s_n-\tau_j(s_n))e^{-a_j(s_n)[x(s_n-\sigma_j(s_n))-K]}.
   \end{split}
 \end{equation*}
Define 
$$L_\sigma^-:=\min_j L_{\sigma_j},\q l_\sigma^+:=\max_j l_{\sigma_j}.$$
Assumption (K1) implies that, for  $\vare>0$ small and  $n$ large, 
\begin{equation*}
    \begin{split}&0\le -x(t_n)+(L+\vare) e^{-\min_j \big[a_j(t_n)\big(x(t_n-\sigma_j(t_n))-K\big)\big]}\\
    &0\ge -x(s_n)+(l-\vare) e^{-\max_j \big[a_j(s_n)\big(x(s_n-\sigma_j(s_n))-K\big)\big]} \, .
      \end{split}
 \end{equation*} 
By taking limits  one obtains
$L\le Le^{-\min_j[a_j^*(L_{\sigma_j}-K)]}$
and $
l\ge le^{-\max_j[a_j^{**}(l_{\sigma_j}-K)]}$, thus
$$\min_j[a_j^*(L_{\sigma_j}-K)]\le 0\le \max_j[a_j^{**}(l_{\sigma_j}-K)].$$
%$=\sum_jL_{\tau_j}e^{-a_j^*(L_{\sigma_j}-K)}$ , $=\sum_j l_{\tau_j}e^{-a_j^{**}(l_{\sigma_j}-K)}$
%Consequently,
%  \begin{equation}\label{Ll_sigma}L_\sigma^-:=\min_j L_{\sigma_j}\le K,\q l_\sigma^+:=\max_j l_{\sigma_j} \ge K.\end{equation}
%
%For $L_\sigma^-:=\min_j L_{\sigma_j}$, we obtain
%$L\le L\sum_je^{-a_j^*(L_\sigma^- -K)}$, implying that $\tilde a\le 0$
%
%--
%  \begin{equation*}
%    L=\frac{1}{\delta}\sum_j p_j L_{\tau_j} e^{-a_j L_{\sigma_j}} \q {\rm and}\q   l=\frac{1}{\delta}\sum_j p_j l_{\tau_j} e^{-a_j l_{\sigma_j}},
%\end{equation*}
Consequently,
  \begin{equation}\label{Ll_sigma}L_\sigma^-\le K\le  l_\sigma^+.\end{equation}
In particular $l\le K\le L$.

%L_\sigma^-:=\min_j L_{\sigma_j}\le K,\q l_\sigma^+:=\max_j l_{\sigma_j} \ge K

Denote $ \zeta_j^+=\limsup_{t\to\infty} \int_{t-\sigma_j(t)}^t p(s)\, ds$, with $\zeta_j^+\in [0,\infty)$ from \eqref{h0},  for $1\le j\le m$.
Fix $\vare>0$ and $i\in\{1,\dots,m\}$. For $n$ sufficiently large such that $x(t-\tau_j(t))\le L+\vare$ for $t\ge t_n-\tau$, integrating \eqref{general} over  $[t_n-\sigma_i(t_n),t_n]$ leads to
\begin{align*}
    x(t_n)&= x(t_n-\sigma_i(t_n)) e^{-\int_{t_n-\sigma_i(t_n)}^{t_n} \delta (v)\, dv}\\&+\sum_j \int_{t_n-\sigma_i(t_n)}^{t_n}  p_j (s) x(s-\tau_j(s))e^{-a_j(s)x(s-\sigma_j(s))}e^{\int_{t_n}^s \delta (v)\, dv} \, ds \\
    &\le x(t_n-\sigma_i(t_n)) e^{-\int_{t_n-\sigma_i(t_n)}^{t_n} \delta (v)\, dv}\\
    &+(L+\vare)\sum_j \int_{t_n-\sigma_i(t_n)}^{t_n}  p_j (s) e^{-a_j(s)K}e^{-a_j(s)(x(s-\sigma_j(s))-K)}e^{\int_{t_n}^s \delta (v)\, dv} \, ds.
    \end{align*}
By applying the mean value theorem for integrals, we obtain
\begin{align*}
  %&\le x(t_n-\sigma_i(t_n)) e^{-\int_{t_n-\sigma_i(t_n)}^{t_n} \delta \beta(v)\, dv}+(L+\vare)\sum_j p_j \int_{t_n-\sigma_i(t_n)}^{t_n}   e^{-a_jx(s-\sigma_j(s))}\beta(s)e^{\int_{t_n}^s \delta \beta(v)\, dv} ds.\\
x(t_n)&= x(t_n-\sigma_i(t_n)) e^{-\int_{t_n-\sigma_i(t_n)}^{t_n} \delta(v)\, dv}+(L+\vare)\sum_j  e^{- a_{ij}^n(\tilde {l}_{ij}^n-K)} \int_{t_n-\sigma_i(t_n)}^{t_n}   p_j(s)e^{-a_j(s)K} e^{\int_{t_n}^s \delta (v)\, dv} \, ds\\
&\le x(t_n-\sigma_i(t_n)) e^{-\int_{t_n-\sigma_i(t_n)}^{t_n} \delta (v)\, dv}+(L+\vare)e^{-\min_{i,j}a_{ij}^n(\tilde {l}_{ij}^n-K)} \int_{t_n-\sigma_i(t_n)}^{t_n}\de(s) e^{\int_{t_n}^s \delta (v)\, dv} \, ds
\\ &= x(t_n-\sigma_i(t_n)) e^{-\int_{t_n-\sigma_i(t_n)}^{t_n} \delta (v)\, dv}+ (L+\vare)e^{-\min_{i,j} a_{ij}^n(\tilde {l}_{ij}^n-K)}\Big[1- e^{-\int^{t_n}_{t_n-\sigma_i(t_n)} \de(v)\, dv}\Big ],
\end{align*}
where $a_{ij}^n=a_j(r_{ij}^n)$, $\tilde {l}_{ij}^n=x(r_{ij}^n-\sigma_j(r_{ij}^n))$, for some $r_{ij}^n\in [t_n-\sigma_i(t_n),t_n]$. Without loss of generality, let $a_{ij}^n\to \tilde a_{ij}, \tilde {l}_{ij}^n\to 
\tilde {l}_{ij}$  and 
$\lim\int^{t_n}_{t_n-\sigma_i(t_n)}\de(v)\, dv =\zeta_i^*$
with $\zeta_i^*\in [0,\zeta_i^+]$. Define $\tilde l=\min_{i,j}\tilde {l}_{ij}\in [l,L]$, $\tilde A^-=\min_{i,j} \tilde a_{ij}(\tilde {l}_{ij}-K)$. %, and note that $\tilde A^-\ge (\tilde {l}-K)\min_{i,j} \tilde a_{ij}$.
Taking limits $n\to \infty, \vare\to 0$ in the formula above, one obtains
\begin{equation}\label{L}L\le L_{\sigma_i}e^{- \zeta_i^*}+Le^{-\tilde A^-}(1-e^{- \zeta_i^*}),\q i=1,\dots,m.\end{equation}

%$$L\le L_{\sigma_i}e^{- \zeta_i^*}+Le^{-\min_{i,j} a_{ij}^*(\tilde {l}_{ij}-K)}(1-e^{- \zeta_i^*})$$

%Define $\tilde l=\min_{i,j}\tilde {l}_{ij}$.

 In a similar way, we derive that there exist  $\tilde{\tilde a}_{ij}\in [a^-,a^+]$, $\tilde {L}_{ij}\in [l,L]$ and $\lim\int^{s_n}_{s_n-\sigma_i(s_n)}\de(v)\, dv =\zeta_i^{**} \in  [0,\zeta_i^+]$, such that for $\tilde A^+:=\max_{i,j} \tilde{\tilde a}_{ij}(\tilde {L}_{ij}-K)$
 %\le (\tilde {L}-K)\max_{i,j} \tilde{\tilde a}_{ij}$
the inequalities
\begin{equation}\label{l}l\ge l_{\sigma_i}e^{- \zeta_i^{**}}+le^{-\tilde A^+}(1-e^{- \zeta_i^{**}}),\q i=1,\dots,m\end{equation}
hold. Moreover, define $\tilde L:=\max_{i,j}\tilde {L}_{ij}$.

\med

{\it Claim 1}. We first claim that
\begin{equation}\label{claim01}\tilde l\le K\le \tilde L.\end{equation}
%where $\tilde l=\min_{i,j}\tilde l_{ij}, \tilde L=\max_{i,j}\tilde L_{ij}.$ 

%To prove \eqref{claim01}, it suffices to show that $\tilde A^-\le 0\le \tilde A^+$.
The proof of  Claim 1 follows in several cases.

\med

Case 1: Assume that $\zeta_{i}^*= 0$ and $\zeta_{i}^{**}= 0$ for all $i=1,\dots,m$.

From \eqref{Ll_sigma},  \eqref{L}  and \eqref{l}, we obtain 
$L_\sigma^-=L=K$ and $l_\sigma^+=l=K$, which contradicts the hypothesis $l<L$.

Case 2: Suppose there are $i_1,i_2\in \{1,\dots,n\}$ such that $\zeta_{i_1}^*> 0$ and $\zeta_{i_2}^{**}> 0$.  

%From \eqref{L:first}, $L\le  L_{\sigma_i}e^{-\de \zeta_i^*}+Lf(\tilde l)(1-e^{-\de \zeta_i^*})$

If $\tilde l>K$, then $\tilde A^->0$ and    \eqref{L} with $i=i_1$ implies that
$L<L_{\sigma_{i_1}} e^{-\de \zeta_{i_1}^*}+L(1-e^{-\de \zeta_{i_1}^*})\le L$, which is not possible. Similarly, if $\tilde L<K$, then
$\tilde A^+<0$ and from  \eqref{l} with $i=i_2$ one derives $l>l$ , again a contradiction.

Case 3: Let $\zeta_{i}^*= 0$  for all $i=1,\dots,m$ and assume there is $i_2\in \{1,\dots,n\}$ such that $\zeta_{i_2}^{**}> 0$. (The case $\zeta_{i}^{**}= 0$  for all $i=1,\dots,m$ and  $\zeta_{i_1}^{*}> 0$ for some  $i_1\in \{1,\dots,n\}$ is analogous.)

As in the above cases, one obtains $L_{\sigma_i}=L_\sigma^-=L=K$  from the inequalities  \eqref{L},
and  $ \tilde A^+\le 0$ from the definition of  $ \tilde A^+$. If $ \tilde A^+< 0$, from  \eqref{l} with $i=i_2$ one obtains a contradiction, and therefore $ \tilde A^+=0$.
%
% and   \eqref{l} with $i=i_2$ one obtains $L_\sigma^-=L=K$ and $ \tilde A^+\ge 0$. Thus $K=\tilde L$ and therefore $ \tilde A^+=0$
%
%and $l_{\sigma_i}=K=L$ for all $i$  as well. Since $l<L$, 
Thus, from \eqref{Ll_sigma}, there is $i_0$ such that $l_{\sigma_{i_0}}=l_\sigma^+=K=L>l$.
Now, using    \eqref{l} with $i=i_0$ leads to
$$l\ge L e^{- \zeta_{i_0}^{**}}+l(1-e^{- \zeta_{i_0}^{**}})> l,$$
a contradiction. 
This concludes the proof of  claim \eqref{claim01}.

%\
%
%
%\
%
%???
%
%Now, using    \eqref{l} with $i=i_2$ leads to
%$$l\ge l_{\sigma_{i_2}}e^{- \zeta_{i_2}^{**}}+l(1-e^{- \zeta_{i_2}^{**}})\ge l,$$
%thus $ l_{\sigma_{i_2}}=l$
%

\med

%Case 4: $\zeta_{i}^{**}= 0$  for all $i=1,\dots,m$ and there is $i_1$ such that $\zeta_{i_1}^{*}> 0$ 

Next, from \eqref{claim01}, we have
$$\tilde A^-\ge -a^+(K-\tilde {l}) ,\q \tilde A^+\le a^+(\tilde L-K).$$
Together with  \eqref{L}  and   \eqref{l}, this leads to the estimates
 \begin{equation}\label{L:first}
L\le L_{\sigma_i}e^{-\zeta_i^*}+Le^{a^+(K-\tilde {l})}(1-e^{- \zeta_i^*}),\q i=1,\dots,m,
\end{equation}
and 
\begin{equation}\label{l:first}l\ge l_{\sigma_i}e^{- \zeta_i^{**}}+le^{a^+(K-\tilde {L})}(1-e^{- \zeta_i^{**}}),\q i=1,\dots,m,
\end{equation}
with $l\le \tilde l\le K\le \tilde L\le L$.

{\it Claim 2}. We finally show that \begin{equation}\label{claim2}l<K<L.\end{equation}
Suppose that $l=K<L$ (the situation $l<K=L$ is similar). From \eqref{claim01}, this yields $\tilde l=K$ and, from \eqref{L:first},  $L_{\sigma_{i}}=L$ for all $i$. Using again  \eqref{Ll_sigma}, one concludes that $L=L_{\sigma}^-\le K$
%$L\le Ke^{-\de \zeta_{i}^*}+L(1-e^{-\de \zeta_{i}^*})<L$,
% if $\zeta_{i_1}^*\ne 0$, and $L\le K$ if $\zeta_{i_1}^*=0$, 
which is not possible.  The proof of \eqref{claim2}  is complete.\end{proof}

%Suppose that $l=K<L$ (the situation $l<K=L$ is similar). From \eqref{claim01}, this yields $\tilde l_{i_1}^-=K$ and, from \eqref{Ll_sigma},  $L_{\sigma_{i_1}}=L_{\sigma}^-=K$. Using again  \eqref{L:first} for $i=i_1$, one concludes that
%$L\le Ke^{-\de \zeta_{i_1}^*}+L(1-e^{-\de \zeta_{i_1}^*})<L$,
%% if $\zeta_{i_1}^*\ne 0$, and $L\le K$ if $\zeta_{i_1}^*=0$, 
%which is not possible.  The proof of \eqref{claim2}  is complete.

\begin{rmk}\label{rmk2.2} {\rm If $0<l<L<\infty$, for $i_1,i_2$ such that   $L_{\sigma}^-=L_{\sigma_{i_1}},l_{\sigma}^+=l_{\sigma_{i_2}}$, formulas  \eqref{Ll_sigma},  \eqref{L:first}, \eqref{l:first}, \eqref{claim2} also show that $\zeta_{i_1}^*> 0$ and $\zeta_{i_2}^{**}> 0$.}
\end{rmk}

%{\bf SEE if things work with $\zeta_i^*=0$ and/or $\zeta_i^{**}=0!!$}

%The lemma below is extracted from results in \cite{ML} and was crucial for our purposes.

%One easily sees that solutions that converge to some limit as $t\to \infty$ 
% must have this limit equal to $K$. %), and possible solutions %[MUDAR: DEFINicoes erradas com o "about $K$ e que nao interessam !!!] 
%As  usual, a  solution $x(t)$ of \eqref{eq:nicholson} is  called  {\it oscillatory about} $K$   if  $x(t)-K$  has arbitrarily large  zeros; otherwise $x(t)$ is  {\it non-oscillatory about} $K$, i.e.,  $x(t)-K$ is  either eventually positive  or eventually negative.
%Here, oscillatory and non-oscillatory solutions about $K$ will be  treated simultaneously, as long as there exists $\lim_{t\to\infty}x(t)$ .
%Here, for the sake of simplification, we say that a positive solution $x(t)$ of \eqref{eq:nicholson} is 
%{\it non-oscillatory} if there exists $\lim_{t\to\infty}x(t)$ and {\it non-oscillatory} if $\liminf_{t\to\infty}x(t)<\limsup_{t\to\infty}x(t)$.  [Or maybe try the case with $x(t)$ eventually monotone, to avoid imposing conditions about uniform continuity. It works. Find the proof later...] Secondly, we shall see that is not possible for a solution to be {\it oscillatory}. Here, we say that $x(t)$ is {\it oscillatory} if $\liminf_{t\to\infty}x(t)<\limsup_{t\to\infty}x(t)$. [Change the definition later...]
As in \cite{ER19}, to proclude the existence of  solutions with an oscillatory behaviour about $K$,
% in the sense that $\liminf_{t\to\infty}x(t)<\limsup_{t\to\infty}x(t)$, 
  we shall use global attractivity results for difference equations of the form
\begin{equation}\label{diffeq}
x_{n+1}=h(x_n),\qq n\in \N_0,
\end{equation} 
where $h:I\to I$ is a continuous function on a   (possibly unbounded) interval $I\subset \R$. % and the initial data $x_0\in I$. 
A fixed point $x^*$ of $h$ is said to be a {\it global attractor} for \eqref{diffeq} if all solutions $(x_n)$ of   \eqref{diffeq} with   initial data $x_0\in I$ satisfy $x_n\to x^*$. 
Several authors have established criteria for a fixed point of \eqref{diffeq} to be, or not to be, a global attractor, see e.g. \cite{ML,M-PN1,singer} and references therein. For completeness, see also the selected assertions from \cite{ML} collected in \cite[Lemma 3.1]{FariaPrates}.
%Among many other authors,  El-Morshedy  and L\'opez \cite{ML} gave several criteria for a fixed point of \eqref{diffeq} to be, or not to be, a global attractor, see e.g. \cite[Lemma 2.5 and Corollary 2.9]{ML}. For completeness, see also the selected assertions from \cite{ML} collected in \cite[Lemma 3.1]{FariaPrates}.
In this context, the use of Schwarzian derivatives has proven to be a powerful tool to study the attractivity of solutions to \eqref{diffeq} \cite{LizTT,singer}. If $h$ is a $C^3$  function on an interval $I$ whose derivative does not vanish,
the Schwarzian derivative of $h$ is defined by 
$$Sh(x):=\frac{h'''(x)}{h'(x)}-\frac{3}{2}\left(\frac{h''(x)}{h'(x)}\right)^2.$$

The next auxiliary result will be applied to a difference equation of the form \eqref{diffeq} used in the proof of our main theorem.
\begin{lem} \label{lem2.4} Let $K>0$ and define \begin{equation}\label{g}g(x):=e^{a^+(K-x)},\q x>0.\end{equation}
For a fixed constant $\th_0\in (0,1)$, assume that \begin{equation}\label{H3:theta}
a^+K(\th_0^{-1} -1)\le 1.\end{equation}
Then $(1-\th_0)g(\th_0K)< 1$ and the function
\begin{equation}\label{h}
h(x):=\frac{\th_0 K}{1-g(x)(1-\th_0)}, \q x\ge \th_0K,
\end{equation}
is defined and decreasing on $I:=[\th_0K,\infty)$, satisfies $h(I)\subset I$, $h(K)=K$, $|h'(K)|\le 1$ and $Sh(x)<0$ on $I$.
\end{lem}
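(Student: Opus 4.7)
The plan is to verify each of the six assertions in turn, exploiting the rather special structure of $h$ as the composition of $g$ with a Möbius map.

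First I would dispose of the inequality $(1-\theta_0)g(\theta_0 K)<1$. The hypothesis \eqref{H3:theta} rewrites as $a^+K(1-\theta_0)\le \theta_0$, so $g(\theta_0 K)=e^{a^+K(1-\theta_0)}\le e^{\theta_0}$. Thus it suffices to check $(1-\theta_0)e^{\theta_0}<1$ for $\theta_0\in(0,1)$, which follows from the fact that $\varphi(\theta_0):=(1-\theta_0)e^{\theta_0}$ satisfies $\varphi(0)=1$ and $\varphi'(\theta_0)=-\theta_0 e^{\theta_0}<0$ on $(0,1)$. Since $g$ is strictly decreasing, $g(x)\le g(\theta_0 K)$ on $I$, hence $1-(1-\theta_0)g(x)>0$ on $I$ and $h$ is well-defined, positive, and strictly decreasing (as the reciprocal, up to a positive constant, of an increasing positive function). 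The identity $h(K)=K$ is immediate from $g(K)=1$. For $h(I)\subset I$, monotonicity of $h$ together with $g(x)\to 0$ as $x\to\infty$ gives $h(x)>\lim_{x\to\infty}h(x)=\theta_0 K$, so $h(x)\in I$ for every $x\in I$.

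Next I would compute $h'(K)$ directly. Writing $h(x)=\theta_0 K\bigl(1-(1-\theta_0)g(x)\bigr)^{-1}$ and using $g'(x)=-a^+g(x)$, one gets
\begin{equation*}
h'(x)=\frac{\theta_0 K(1-\theta_0)g'(x)}{\bigl(1-(1-\theta_0)g(x)\bigr)^{2}}=-\frac{\theta_0 K(1-\theta_0)a^+g(x)}{\bigl(1-(1-\theta_0)g(x)\bigr)^{2}}.
\end{equation*}
Evaluating at $x=K$, where $g(K)=1$ and $1-(1-\theta_0)=\theta_0$, yields $h'(K)=-a^+K(\theta_0^{-1}-1)$, and the bound $|h'(K)|\le 1$ is precisely \eqref{H3:theta}.

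The last and most delicate item is $Sh<0$. Rather than computing the three derivatives of $h$ by hand, I would observe that $h=\phi\circ g$, where $\phi(y)=\theta_0 K/\bigl(1-(1-\theta_0)y\bigr)$ is a Möbius transformation. Since Möbius transformations have identically zero Schwarzian derivative, the composition rule
\begin{equation*}
S(\phi\circ g)(x)=S\phi\bigl(g(x)\bigr)\bigl(g'(x)\bigr)^{2}+Sg(x)
\end{equation*}
reduces the problem to computing $Sg$. With $g'(x)=-a^+g(x)$, $g''(x)=(a^+)^2g(x)$ and $g'''(x)=-(a^+)^3g(x)$, a direct substitution into the definition of the Schwarzian gives $Sg(x)=(a^+)^2-\tfrac{3}{2}(a^+)^2=-\tfrac{1}{2}(a^+)^2<0$, so $Sh(x)=-\tfrac{1}{2}(a^+)^2<0$ on $I$.

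The only genuine subtlety is the first step, where one must recognise that the hypothesis $a^+K(\theta_0^{-1}-1)\le 1$ is exactly what is needed both to make the denominator of $h$ stay positive on $I$ and to control $|h'(K)|$; the Schwarzian estimate, which looks the most technical, collapses to a one-line computation once one exploits the Möbius structure of $\phi$.
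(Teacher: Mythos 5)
Your proof is correct and follows essentially the same route as the paper: the same monotonicity argument for well-definedness and $h(I)\subset I$, the same computation of $h'(K)=-a^+K(\th_0^{-1}-1)$, and the same observation that $h$ is a M\"obius map composed with $g$, so $Sh=Sg=-\tfrac{1}{2}(a^+)^2<0$. The only cosmetic difference is that you verify $(1-\th_0)e^{\th_0}<1$ by differentiating $\varphi(\th_0)=(1-\th_0)e^{\th_0}$, whereas the paper invokes $\log(1-x)+x<0$.
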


\begin{proof} For $\th_0$ as in the statement, from \eqref{H3:theta} we obtain
$$(1-\th_0)g(\th_0K)= (1-\th_0)e^{a^+(K-\th_0K)}\le (1-\th_0)e^{\th_0}<1,$$
because $\log(1-x)+x<0$ for all $x\in (0,1)$.
Hence, since $g$ is decreasing, $g(x) (1-\th_0)<1$ for $x\ge \th_0K$ and $h$ is well-defined, decreasing  and positive on $I=[\th_0K,\infty)$. Moreover, $h(\infty)=\th_0K$, therefore $h(I)\subset I$. Clearly $g(K)=1$, $K$ is the unique fixed point of $h$ and $h'(K)=-a^+K(\th_0^{-1}-1)\in [-1,0)$. Finally, since $h(x)=h_1(g(x))$ with $h_1$ a fractional linear transformation, we have \cite{singer} $Sh(x)=Sg(x)=-\frac{(a^+)^2}{2}<0$.\end{proof}

%This claim is equivalent 
%to $f(\th)< (1-e^{-\de \zeta^+})^{-1}$.
%From (H1), $\th=Ke^{-\de \zeta^+}\ge K-\frac{1}{a^+}e^{-\de \zeta^+}$ and, since
% $f$ is decreasing,
% \begin{equation}\label{theta}
%\begin{split}
%(1-e^{-\de \zeta^+})f(\th)&\le (1-e^{-\de \zeta^+})f\big(K-\frac{1}{a^+}e^{-\de \zeta^+}\big)\\
%&=(1-e^{-\de \zeta^+})\frac{1}{\de}\sum_jp_j\exp\left (-a_jK+\frac{a_j}{a^+}e^{-\de \zeta^+}\right)\\
%&\le(1-e^{-\de \zeta^+})\exp\big (e^{-\de \zeta^+}\big).
%\end{split}
%\end{equation}
% 
% 
% 
%  from \eqref{log_p/de}  it follows that
%\begin{equation}\label{h3}
%\begin{split}
%(1-e^{-\de \zeta^+})f(\th)&\le (1-e^{-\de \zeta^+})f\big(e^{-\de \zeta^+}\frac{1}{a^+}\log(p/\de)\big)\\
%&=(1-e^{-\de \zeta^+})\frac{1}{\de}\sum_jp_j\exp\left (-\frac{a_j}{a^+}\log(p/\de)e^{-\de \zeta^+}\right)\\
%&\le(1-e^{-\de \zeta^+})\frac{p}{\de}\exp\left (-\frac{a^-}{a^+}\log(p/\de)e^{-\de \zeta^+}\right).
%\end{split}
%\end{equation}
%Next, again observe that
%$$ \log(1-e^{-\de \zeta^+})+e^{-\de \zeta^+}<0,
%$$
%since $x+\log (1-x)<0$ for all $x\in (0,1).$ 

\section
{Global  attractivity criteria for \eqref{general}}
\setcounter{equation}{0}

 In this section, we establish our main results on the global attractivity of solutions for  the general nonautonomous model \eqref{general}.  An additional hypothesis on the size of the delays $\sigma_i(t)$ will be imposed, and
 two major criteria  given: the first one depends on  the lower and upper bounds of a particular fixed solution $x^*(t)$, while the second statement does not require any a priori knowledge of  such bounds, and instead uses the uniform bounds established in  Theorem \ref{thm2.1}. 
 
% For both the first and second situations,, expressed respectively by:
%
% 
% \begin{itemize}
%\item[(A4)] For a  positive solution $x^*(t)$ satisfying 
% $m^*\le x^*(t)\le M^*$ for $ t\ge T-\tau,$ for some $m^*,M^*>0$ and $T>0$,  it holds
%  \begin{equation}\label{A4}
%a^+M^*\Big (\frac{M^*}{m^*}e^{\zeta^+} -1\Big )\le 1,
%\end {equation}
%where 
%\begin{equation}\label{zeta_M}
%\zeta^+ =\max_{1\le j\le m} \limsup_{t \to \infty} \int_{t-\sigma_j(t)}^{t}  \de(s)\, ds;
%\end{equation}
%
%
%%\end{itemize}
%% 
%% \begin{itemize}
%\item[(A5)] For $\al,\ga, D,P$ are as in \eqref{alphagamma}  and $\zeta^+$  as in \eqref{zeta_M}, it holds
% \begin{equation}
%\frac{a^+}{a^-}\log \ga \, e^{2(D+P)}\Big [\frac{a^+\log \ga}{a^-\log \al}e^{4D+3P+\zeta^+} -1\Big ]< 1.
%\end {equation}
%
%\end{itemize}
%
The major result of this paper is stated below.

 \begin{thm}\label{thm3.3}  Assume  (A0)--(A3). \vskip 1mm
 
(a)   Let $x^*(t)$ be a positive solution  of \eqref{general} satisfying the following condition:
 \begin{itemize}
\item[(A4)] There are  $m^*,M^*>0$ and $T>0$ such that $m^*\le x^*(t)\le M^*$ for $ t\ge T-\tau,$ and  \begin{equation}\label{A4}
a^+M^*\Big (\frac{M^*}{m^*}e^{\zeta^+} -1\Big )\le 1,
\end {equation}
where 
\begin{equation}\label{zeta_M}
\zeta^+ =\max_{1\le j\le m} \limsup_{t \to \infty} \int_{t-\sigma_j(t)}^{t}  \de(s)\, ds;
\end{equation}
\end{itemize}
Then $x^*(t)$   is globally attractive.\vskip 1mm

(b) Suppose that:
 \begin{itemize}
\item[(A5)] For $\al,\ga, D,P$ are as in \eqref{alphagamma}  and $\zeta^+$  as in \eqref{zeta_M}, it holds
 \begin{equation}
\frac{a^+}{a^-}\log \ga \, e^{2(D+P)}\Big [\frac{a^+\log \ga}{a^-\log \al}e^{4D+3P+\zeta^+} -1\Big ]< 1.
\end {equation}
\end{itemize}
 Then \eqref{general} is globally attractive; that is, 
$\lim_{t\to \infty} (x_1(t)-x_2(t))=0$
 for any two positive solutions $x_1(t), x_2(t)$.
\end{thm}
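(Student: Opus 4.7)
My strategy is to reduce the global attractivity of $x^*(t)$ to the attractivity of the constant solution $y \equiv 1$ for a transformed equation, and then apply the discrete-map machinery of Section 3. The first step is to use Lemma 3.1: the change of variables $y(t) = x(t)/x^*(t)$ converts \eqref{general} into \eqref{general2}, which has $K = 1$ as a positive equilibrium, so (K1) holds. Using $m^* \leq x^*(t) \leq M^*$ together with the identity $D(t) = (\log x^*)'(t) + \delta(t)$ (obtained from $x^*$ solving \eqref{general}), one checks that (A0)--(A3) and \eqref{h0} transfer to \eqref{general2}; moreover the new coefficients $A_j(t) = a_j(t)\,x^*(t - \sigma_j(t))$ satisfy $A_j(t) \leq A^+ := a^+ M^*$.

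Let $x(t)$ be any positive solution of \eqref{general}, set $y(t) = x(t)/x^*(t)$, and define $l = \liminf y$, $L = \limsup y$; by Theorem 2.1, $0 < l \leq L < \infty$. If $l = L$, Lemma 3.2 forces $l = L = 1$, i.e.\ $x(t)/x^*(t) \to 1$, which combined with boundedness of $x^*$ yields $x - x^* \to 0$. So assume for contradiction $l < L$. Lemma 3.3 applied to \eqref{general2} (with $K = 1$ and ``$a^+$'' $= A^+$) gives $l < 1 < L$ together with the estimates \eqref{L:first}--\eqref{l:first}. I would next set
\[
\theta_0 = \frac{m^*}{M^* e^{\zeta^+}},
\]
so that (A4) reads exactly $A^+(\theta_0^{-1} - 1) \leq 1$, and Lemma 3.4 produces a decreasing $C^3$ map $h:[\theta_0, \infty) \to [\theta_0, \infty)$ with $h(1) = 1$, $|h'(1)| \leq 1$ and $Sh < 0$. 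Combining \eqref{L:first}--\eqref{l:first} at the indices realising $L_\sigma^-$ and $l_\sigma^+$ with the bounds $l \leq \tilde l \leq 1 \leq \tilde L \leq L$ and $0 < \zeta_i^*, \zeta_i^{**} \leq \zeta^+$ (controlling the limits $\zeta_i^*, \zeta_i^{**}$ by $\zeta^+$ up to the bounded correction coming from the integral identity above) yields, after some algebra, the discrete relations $L \leq h(l)$ and $l \geq h(L)$. Since $1$ is the global attractor of $x_{n+1} = h(x_n)$ on $[\theta_0, \infty)$ (Singer's theorem, cf.\ \cite[Lemma 3.1]{FariaPrates}), iterating these relations forces $l = L = 1$, contradicting $l < L$.

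\textbf{Part (b) and main obstacle.} For (b), fix a positive solution $x_1$ and apply (a) with $x^* = x_1$; an arbitrary second positive solution $x_2$ then satisfies $x_2 - x_1 \to 0$. By Theorem 2.1, $x_1$ obeys the uniform bounds \eqref{permGen}, so for any small $\varepsilon > 0$ we may take $m^* = m - \varepsilon$, $M^* = M + \varepsilon$ and $T$ large enough; substituting the explicit formulas \eqref{boundsmM} into (A4) reproduces the left-hand side of (A5) up to an $O(\varepsilon)$ correction, and the strict inequality in (A5) provides slack to absorb $\varepsilon$, so (A4) holds and part (a) applies. The principal obstacle is the book-keeping in part (a): one must convert the multi-index inequalities \eqref{L:first}--\eqref{l:first} of Lemma 3.3, which still contain the auxiliary quantities $\tilde l, \tilde L, \zeta_i^*, \zeta_i^{**}, L_{\sigma_i}, l_{\sigma_i}$, into the clean discrete recursion $L \leq h(l),\ l \geq h(L)$ while preserving the monotonicity of $h$, and to choose $\theta_0$ so that the resulting condition for Singer's theorem matches (A4) exactly.
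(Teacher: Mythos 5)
Your proposal is correct and follows essentially the same route as the paper: part (a) is exactly the reduction via the change of variables $y=x/x^*$ of Lemma \ref{lem3.1} to the equilibrium case $K=1$ (which the paper isolates as Proposition \ref{prop4.1}, built on Lemmas \ref{lem2.2}--\ref{lem2.4} and the Schwarzian-derivative attractivity criteria for $x_{n+1}=h(x_n)$), with your choice $\th_0=m^*/(M^*e^{\zeta^+})$ matching the paper's verification that (A4) implies (K2) for the transformed equation via the bounds $A^+\le a^+M^*$ and $Z^+\le \log(M^*/m^*)+\zeta^+$. Part (b) likewise coincides with the paper's argument of feeding the explicit permanence bounds \eqref{boundsmM} into (A4) with an $\vare$ of slack absorbed by the strict inequality in (A5).
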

 
%  \begin{thm}\label{thm3.3}  Assume  (A0)--(A3).
% 
%(a)   If $x^*(t)$ is a positive solution  of \eqref{general} satisfying (A4), then $x^*(t)$   is globally attractive.
%
%(b) If (A5) is satisfied,
% then \eqref{general} is globally attractive; that is, 
% $$\lim_{t\to \infty} (x_1(t)-x_2(t))=0$$
% for any two positive solutions $x_1(t), x_2(t)$.
%\end{thm}

 The proof of this theorem will follow after a preparatory step, where, in view of Lemma \ref{lem3.1},
 we first assume the provisional requirement (K1), together with  condition (A4) for the case $m^*=M^*=K$.
% : the study of the global attractivity of the  positive equilibrium $K$ sets a sharper and more general criterion than the one in \cite{FariaPrates}.
% As previously mentioned, the proof exploits the method used in \cite{ER19,FariaPrates},
%although some major changes and new arguments are needed, as already shown in Section 2.
%
% We stress that  the major arguments in \cite{ER19,FariaPrates} are followed, but careful adjustments are performed and further estimates derived, in order to release or drop some  of the hypotheses in \cite{FariaPrates}. 

%We shall however see that the results obtained for \eqref{eq:nicholson} will be as
%sharp as the results  for \eqref{eq:nicholson_1pair}. %, as  shown in the next sections.

%Let $f:I\to I$ be a $C^3$ function on a real interval $I=[a,b]$ ($a<b$, with $b\in \R$ or $b=\infty$), with 
%a unique fixed point $x^*\in (a,b)$ and such that
%$$(f(x)-x^*)(x-x^*)<0\qq {\rm for \ all}\ x\in (a,b), x\ne x^*,$$ 
%and assume that there are constants $c,d$ with $a\le c<x^*<d\le b$ such that:\\
%(i)  $Sf(x)<0$ for $x\in (c,d)$, where  $Sf(x)$ is the Schwarzian derivative of $f$;\\
%(ii) $-1\le f'(x^*)<0$.\\
%Then $x^*$ is a global attractor for \eqref{diffeq}.

%First, we shall prove that a positive solution $x(t)$ which converges as $t\to\infty$, then necessarily it has $\lim_{t\to\infty}x(t)=K$.

%Next, we address the global attractivity of the positive equilibrium $K$ of \eqref{eq:nicholson}.%

\begin{prop}\label{prop4.1}  Assume  (A0)--(A3), (K1) and
%\begin{equation}\label{A5}
%a^+K(e^{\zeta^+} -1)\le 1,
%\tag{A5}
%\end{equation}
\begin{itemize}\item[(K2)] $a^+K(e^{\zeta^+} -1)\le 1,$
where  $\zeta^+$  is as in \eqref{zeta_M}.
%\begin{equation}\label{zeta_M}
%\zeta^+ =\max_{1\le j\le m} \limsup_{t \to \infty} \int_{t-\sigma_j(t)}^{t}  \de(s)\, ds.
%\end{equation}
\end{itemize}
Then the equilibrium $K$   of \eqref{general} is globally attractive.
\end{prop}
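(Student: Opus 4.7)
The plan is to argue by contradiction, reducing the global attractivity of $K$ to the global attractivity of a fixed point for an auxiliary one‑dimensional map $h$ with negative Schwarzian derivative. For an arbitrary positive solution $x(t)$ with initial data in $C_0^+$, write $l=\liminf_{t\to\infty}x(t)$ and $L=\limsup_{t\to\infty}x(t)$. Theorem \ref{thm2.1} immediately yields $0<l\le L<\infty$. If $l=L$, then $x(t)\to L$ and Lemma \ref{lem2.2} forces $L=K$, so there is nothing to prove in that case. The whole work lies in excluding $l<L$: under this assumption Lemma \ref{lem2.3} provides $l<K<L$ together with the refined inequalities \eqref{L:first}--\eqref{l:first} and the associated quantities $L_{\sigma_j},l_{\sigma_j},\tilde l,\tilde L,\zeta_i^*,\zeta_i^{**}$ used there.

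Next I would construct the auxiliary difference equation. Set $\theta_0:=e^{-\zeta^+}$ and $g(x):=e^{a^+(K-x)}$, and let $h$ be the map from Lemma \ref{lem2.4}; condition (K2) is precisely what is required to invoke that lemma with this choice of $\theta_0$. Selecting indices $i_1,i_2$ for which $L_{\sigma_{i_1}}=L_\sigma^-\le K$ and $l_{\sigma_{i_2}}=l_\sigma^+\ge K$ (both available via \eqref{Ll_sigma} and Remark \ref{rmk2.2}), and inserting $\tilde l\ge l$, $\tilde L\le L$ into \eqref{L:first}--\eqref{l:first}, one arrives at
\[
L\le H_{\theta_{i_1}}(l),\qquad l\ge H_{\theta_{i_2}}(L),\qquad H_\theta(x):=\frac{\theta K}{1-(1-\theta)g(x)},
\]
with $\theta_{i_1}=e^{-\zeta_{i_1}^*}$, $\theta_{i_2}=e^{-\zeta_{i_2}^{**}}$, both $\ge\theta_0$ since $\zeta_i^*,\zeta_i^{**}\le\zeta^+$. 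A short computation gives $\partial_\theta H_\theta(x)=K(1-g(x))/(1-(1-\theta)g(x))^2$, so $H_\theta(x)$ is decreasing in $\theta$ for $x<K$ and increasing in $\theta$ for $x>K$. Combined with $l<K<L$, this lets me replace the subsequential exponents $\theta_{i_1},\theta_{i_2}$ by the uniform $\theta_0$ without losing either inequality, and they become $L\le h(l)$ and $l\ge h(L)$.

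The endgame then runs inside the one‑dimensional dynamics of $h$. Lemma \ref{lem2.4} guarantees that $h$ maps $I:=[\theta_0 K,\infty)$ into itself, is decreasing with unique fixed point $K$, satisfies $|h'(K)|\le 1$, and has $Sh<0$. Standard results on the global attractivity of scalar difference equations with negative Schwarzian derivative (see e.g.\ \cite{singer} and \cite[Lemma 3.1]{FariaPrates}) then identify $K$ as the global attractor of $x_{n+1}=h(x_n)$ on $I$. Noting that $l\ge h(L)\ge\lim_{x\to\infty}h(x)=\theta_0 K$ places both $l$ and $L$ in $I$, and applying the decreasing map $h$ to the two inequalities obtained above gives $h^2(l)\le l<K<L\le h^2(L)$. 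Iterating, $(h^{2n}(l))$ is monotone and bounded in $I$, hence converges to a fixed point of $h^2$ which, by the global attractivity of $K$, must be $K$ itself; but $h^{2n}(l)\le l<K$ for every $n$, a contradiction that forces $l=L$ and concludes the argument.

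The hard part will be the middle paragraph, where the bounds inherited from Lemma \ref{lem2.3} involve the subsequential exponents $\zeta_{i_k}^*,\zeta_{i_k}^{**}$ while the comparison map $h$ is built from the global constant $\zeta^+$. The reconciliation rests on two facts extracted from Lemma \ref{lem2.3}, namely $l<K<L$ together with $L_\sigma^-\le K\le l_\sigma^+$: these place $l$ and $L$ on opposite sides of $K$ and reverse the sign of $\partial_\theta H_\theta$ in exactly the way needed for the uniform bound $\theta_0\le\theta_{i_k}$ to preserve both inequalities.
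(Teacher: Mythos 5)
Your proposal is correct and follows essentially the same route as the paper's proof: reduction to the oscillatory case via Theorem \ref{thm2.1} and Lemmas \ref{lem2.2}--\ref{lem2.3}, the two-parameter comparison map (your sign analysis of $\partial_\theta H_\theta$ is exactly the paper's observation that $(x-K)\,\frac{\p h}{\p \zeta}(x,\zeta)<0$), and the terminal inequalities $L\le \mathfrak{h}(l)$, $l\ge \mathfrak{h}(L)$ played off against the global attractivity of $K$ for $x_{n+1}=\mathfrak{h}(x_n)$ guaranteed by $S\mathfrak{h}<0$ and $|\mathfrak{h}'(K)|\le 1$. The only cosmetic difference is the endgame, where you iterate $\mathfrak{h}^2$ directly to reach the contradiction instead of citing \cite[Lemma 2.5]{ML} as the paper does.
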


\begin{proof} 
Let $x(t)$ be a solution with initial condition in $C_0^+$. By the permanence in Theorem \ref{thm2.1}, for 
$l,L$ as in \eqref{lL}
%\begin{equation}\label{lL}
%    l:= \liminf_{t \to \infty }x(t),\  \limsup_{t \to \infty }x(t)=: L
%\end{equation}
we have $0<l\le L<\infty$. Clearly our assumptions imply \eqref{h0}.
In virtue of Lemmas \ref{lem2.2} and \ref{lem2.3}, it suffices to show that the situation $l<K<L$ is not possible.

%[CORRECT below!!!
%
%For that, we resume the proof of Lemma \ref{lem2.3}, where we conclude the existence of positive constants $\tilde l_i^-,\tilde L_i^-,\zeta_i^*,\zeta_i^{**}$, with $\tilde l_i^-\le K\le \tilde L_i^+$, such that the estimates \eqref{L:first}, \eqref{l:first} hold, for all $i,=1,\dots,m$. 
%
%For each $i,j$ fixed,
%$$e^{-a_j\tilde  l_i^-}=e^{-a_j K} e^{a_j(K-\tilde  l_i^-)}\le e^{-a_j K}e^{a^+(K-\tilde  l_i^-)}$$
%and
%$$e^{-a_j\ \tilde L_i^+}=e^{-a_j K} e^{a_j(K-\tilde L_i^+)}\ge e^{-a_j K}e^{a^+(K-\tilde L_i^+)}$$
%Let $f$ be the function in \eqref{f}. Applying \eqref{L:first}, \eqref{l:first} and the identity $f(K)=1$, we therefore deduce that
%\begin{subequations} \label{l,L:second}
%    \begin{equation}
%       L\le L_{\sigma_i}e^{-\de \zeta_i^*}+Lg(\tilde l_i^-)(1-e^{-\de \zeta_i^*})\hskip 2.5cm
%    \end{equation}
%    \begin{equation}
%       l\ge l_{\sigma_i}e^{-\de \zeta_i^{**}}+lg(\tilde L_i^+)(1-e^{-\de \zeta_i^{**}}),\q i=1,\dots,m,
%    \end{equation}
%\end{subequations}
%where $g$ is defined as
%$$g(x)=e^{a^+(K-x)},\q x>0.$$
%
%
%------
%

For that, we resume the proof of Lemma \ref{lem2.3}.  We have concluded that $L_{\sigma^-}\le K\le l_{\sigma^+}$
 and that there are  constants $\zeta_{i}^*,\zeta_{i}^{**}\in [0,\zeta^+]$ and $\tilde l\in[l,K],\tilde L\in[K,L]$  such that formulas \eqref{L:first}, \eqref{l:first},  are satisfied.
  Moreover, with $i_1,i_2$ such that   $L_{\sigma}^-=L_{\sigma_{i_1}},l_{\sigma}^+=l_{\sigma_{i_2}}$,
  it was observed in Remark \ref{rmk2.2} that $\zeta_{i_1}^*,\zeta_{i_2}^{**}\in (0,\zeta^+]$.

 %$\tilde l_{i_1}^-,\tilde L_{i_2}^-,\in [l,L]$ 

%Claim \eqref{claim01}  also implies that
%$$e^{-a_j\tilde  l}=e^{-a_j K} e^{a_j(K-\tilde  l)}\le e^{-a_j K}e^{a^+(K-\tilde  l)}$$
%and
%$$e^{-a_j\ \tilde L}=e^{-a_j K} e^{a_j(K-\tilde L)}\ge e^{-a_j K}e^{a^+(K-\tilde L)}$$
Let $K$ be as in (K1) and  $g(x)=e^{a^+(K-x)}$  as in \eqref{g}. From \eqref{L:first} with $i=i_1$, \eqref{l:first} with $i=i_2$, we deduce that
\begin{subequations} \label{l,L:second}
    \begin{equation}
       L\le Ke^{-\de \zeta_{i_1}^*}+Lg(\tilde l)(1-e^{- \zeta_{i_1}^*})
    \end{equation}
    \begin{equation}
       l\ge Ke^{-\de \zeta_{i_2}^{**}}+lg(\tilde L)(1-e^{- \zeta_{i_2}^{**}}).
    \end{equation}
\end{subequations}
 In particular, $l>Ke^{- \zeta^+}$.
In Lemma \ref{lem2.4}, choose $\th_0=\th_0(\zeta):=e^{- \zeta}$, and 
note that (K2) guarantees that \eqref{H3:theta} is satisfied for $\zeta\in (0,\zeta^+]$. 
Thus, the function $$h(x,\zeta):=\frac{ Ke^{-\zeta}}{1-g(x)(1-e^{- \zeta})}$$
is well defined for $x\ge Ke^{- \zeta^+}$ and $\zeta \in (0,\zeta^+]$. 
The estimates in \eqref{l,L:second} now lead to
    \begin{equation}\label{l,L:third}
       L\le h(\tilde l, \zeta_{i_1}^*),\q 
       l\ge h(\tilde L, \zeta_{i_2}^{**}).
    \end{equation}
Since $\frac{\p h}{\p x} (x, \zeta)  <0,
(x-K)   \frac{\p h}{\p\zeta} (x, \zeta)  <0$  for all $ x\ne K$ and $\zeta \in (0,\zeta^+]$, from \eqref{claim01} and \eqref{l,L:third} we obtain
$L\le h(\tilde l, \zeta^+)\le h(l, \zeta^+)$, $ l\ge h(\tilde L, \zeta^+)\ge h(L, \zeta^+)$
and finally 
\begin{equation} \label{l,L:4th}L\le \mathfrak{h}(l), \q l\ge \mathfrak{h}(L),\end{equation}
where $\mathfrak{h}:[Ke^{- \zeta^+},\infty)\to (Ke^{-\zeta^+},\infty)$ is given by
 \begin{equation}\label{frakh}
 \mathfrak{h}(x):=h(x,\zeta^+)=\frac{ Ke^{-\zeta^+}}{1-g(x)(1-e^{- \zeta^+})}. \end{equation}
%and  $\th:=Ke^{-\de \zeta^+}>\th_1$. 
Clearly, $\mathfrak{h}$ is strictly decreasing and $K$ is its unique fixed point. From \eqref{l,L:4th}, $\mathfrak{h}([l,L])\supset [l,L]$ with $l<K<L$.  Consider the difference equation 
 \begin{equation}\label{DE_h}
 x_{n+1}=\mathfrak{h}(x_n).
  \end{equation}
From \cite[Lemma 2.5]{ML}, we therefore conclude that $K$ is not a global attractor of \eqref{DE_h}.

%\centerline  {\psset{algebraic=true,plotpoints=1000,plotstyle=curve,%
%     unit=.35cm, linewidth=0.5pt}
%\begin{pspicture}(0.5,12)(9,9.5)
%\psline[linestyle=dashed]{->}(-3.5,0)(12,0)
%\psline[linestyle=dashed]{->}(0,-0.5)(0,12)
%\psline[linestyle=dotted](0,5)(9,5)
%\psline[linestyle=dotted](0,2.2)(9,2.2)
%\psline[linestyle=dotted](2.2,0)(2.2,11)
%\psline[linestyle=dotted](5,0)(5,11)
%\rput[r](5.5,-0.9){\small{$L$}}
%\rput[r](-0.4,1.8){\small{$\mathfrak{h}(L)<l$}}
%\rput[r](-0.4,7.6){\small{$\mathfrak{h}(l)>L$}}
%\rput[r](2.4,-0.9){\small{$l$}}
%\rput[r](1.8,-0.9){\small{$ \th_0$}}
%\rput[r](1.8,0){$\smal \circ$}
%%\psline[linestyle=dotted](1.9,0)(1.9,12)
%\psline[linestyle=dashed](0,0)(11,11)
%\rput[r](12,11){$  y=x$}
%\rput[r](12.8,2){$  y=\mathfrak{h}(x)$}
%\rput[r](4.2,-.9){$ K$}
%\psplot[linewidth=1pt]{1.97}{12}{2/(2-3*Euler^(-0.25*x))}
%\psline(2.2,2.2)(2.2,5)
%\psline(2.2,5)(5,5)
%\psline(5,2.2)(5,5)
%\psline(2.2,2.2)(5,2.2)
%\rput[r](3.44,3.1){$\smal \bullet$}
%\rput[r](2.5,7.43){$\smal  \bullet$}
%\rput[r](5.3,1.72){$\smal  \bullet$}
%\end{pspicture}}

   In contrast, as shown in Lemma \ref{lem2.4}, $S\mathfrak{h}(x)<0$ for $x>0$,
 $|\mathfrak{h}'(K)|\le 1$, and under these conditions  \cite[Corollary 2.9]{ML} implies that $K$ is a global attractor of \eqref{DE_h},  contradicting the above statement.
 Therefore, the situation $l<L$ is not possible.
 This ends the proof.
\end{proof}

\begin{proof}[Proof of Theorem \ref{thm3.3}] Fix any positive solution $x^*(t)$ of \eqref{general} as in (a). 
  After the change of variables \eqref{y},
consider the transformed equation \eqref{general2} whose  coefficients are defined in \eqref{coefNew}.
%\begin{equation}\label{coefNew}
%\begin{split}
% &P_j(t)=\frac{1}{x^*(t)}p_j(t)x^*(t-\tau_j(t)),\\
%  &A_j(t)=a_j(t)x^*(t-\sigma_j(t)),\q j=1,\dots,m,\\
% &D(t)=\sum_j P_j(t)e^{-A_j(t)}.
% \end{split}
%\end{equation}
Define $P(t):=\sum_j P_j(t)$. Clearly  \eqref{general2} has the form  \eqref{general} and
$P(t)\ge e^{a^-m^*}D(t),  P(t)\le e^{a^+M^*}D(t)$ for $t\ge T$, hence \eqref{general2} satisfies (A1).
Moreover,  (K1) holds for \eqref{general2} with  $K=1$. 
Observe that
$$
D(t)=\frac{1}{x^*(t)}\Big [ (x^*)'(t)+\de(t) x^*(t)\Big ],
$$
thus 
\begin{equation}\label{int_D(t)} \begin{split}
&\int_T^\infty  D(s)\, ds
\ge \log \frac{m^*}{M^*}+ \int_T^\infty  \de(s)\, ds,\\
& \int_{t-\sigma_j(t)}^{t}  D(s)\, ds
=\log \Big (\frac{x^*(t)}{x^*(t-\sigma_j(t))}\Big )+ \int_{t-\sigma_j(t)}^{t}  \de(s)\, ds.
\end{split}\end{equation}
Consequently, if (A2), (A3) are valid for \eqref{general}, then (A2), (A3) are valid for \eqref{general2}
as well. 
To apply Proposition \ref{prop4.1}, we just need to check that  \eqref{general2} satisfies (K2), which translates here as $A^+(e^{Z^+} -1)\le 1$, where  according to the previous notation and from \eqref{coefNew} and \eqref{int_D(t)},
$$A^+:=\max_j\sup_{t\ge T} A_j(t)\le a^+M^*$$ and
\begin{equation}\label{Z+}
\begin{split}
Z^+:%&=\max_{1\le j\le m} \limsup_{t \to \infty}\int_{t-\sigma_j(t)}^{t}  D(s)\, ds\\
&= \max_{1\le j\le m} \limsup_{t \to \infty} \left[\log \Big (\frac{x^*(t)}{x^*(t-\sigma_j(t))}\Big )+ \int_{t-\sigma_j(t)}^{t}  \de(s)\, ds\right]\\
&\le \log\Big (\frac{M^*}{m^*}\Big )+\zeta^+.
\end{split}\end{equation}
From (A4), this finishes the proof of (a).

To prove (b), we invoke Theorem \ref{thm2.1} to obtain  the  uniform lower and upper bounds $m,M$ in \eqref{permGen} given by the estimates \eqref{boundsmM}. Simple computations show that
(A5) implies $a^+M\Big (\frac{M}{m}e^{\zeta^+} -1\Big )< 1.$ Fix  any positive solution $x^*(t)$, and let $\vare>0$  be arbitrarily small and $T>0$ large such that
$a^+M_\vare\Big (\frac{M_\vare}{m_\vare}e^{\zeta^+} -1\Big )\le 1$
and
$0<m_\vare \le x^*(t)\le M_\vare$ for $t\ge T-\tau$, where $m_\vare=m-\vare, M_{\vare}=M+\vare $.  Replacing  $m^*,M^*$ in (a) by $m_\vare,M_\vare$,  the result follows.
\end{proof}

 When (K1)  holds and there are multiple pairs of delays, in general one cannot  explicitly evaluate $K$, thus  a criterion with (K2)  not depending on $K$ is useful.

\begin{cor}\label{cor3.2}  Assume  (A0)--(A3), (K1) and
\begin{itemize}\item[(K2*)] $ \frac{a^+}{a^-}(e^{\zeta^+} -1)\log \ga \le 1,$
%\begin{equation}\label{zeta_M}
%\zeta^+ =\max_{1\le j\le m} \limsup_{t \to \infty} \int_{t-\sigma_j(t)}^{t}  \de(s)\, ds.
%\end{equation}
%\end{itemize}
% \begin{equation}\label{A5*}
% \frac{a^+}{a^-}(e^{\delta\zeta^+} -1)\log \ga \le 1\tag{A5*}
% \end{equation}
where $\zeta^+$ is as in \eqref{zeta_M}.
\end{itemize}
Then the equilibrium $K$   of \eqref{general} is globally attractive.
\end{cor}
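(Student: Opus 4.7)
The plan is to reduce this corollary to Proposition \ref{prop4.1} by showing that (K2*) implies condition (K2). This amounts to establishing the a priori upper bound $K \le (\log\gamma)/a^-$ on the equilibrium.

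First I would exploit the defining identity in (K1), $\delta(t) = \sum_{j=1}^m p_j(t) e^{-a_j(t) K}$, valid for all $t \ge 0$. Since $a_j(t) \ge a^- > 0$ (by (A0)) and $K > 0$, every factor satisfies $e^{-a_j(t)K} \le e^{-a^-K}$, so
\[
\delta(t) \le e^{-a^- K} \sum_{j=1}^m p_j(t) = e^{-a^- K} p(t), \qquad t \ge 0.
\]
Rearranging yields $p(t)/\delta(t) \ge e^{a^- K}$ for every $t \ge 0$, so passing to the liminf as $t \to \infty$ gives $\alpha \ge e^{a^- K}$, and hence
\[
K \le \frac{\log \alpha}{a^-} \le \frac{\log \gamma}{a^-}.
\]
Assumption (A1) ensures $1 < \alpha \le \gamma < \infty$, so both logarithms are well-defined and positive.

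Multiplying this bound by $a^+(e^{\zeta^+}-1) \ge 0$ and invoking (K2*), I obtain
\[
a^+ K\,(e^{\zeta^+} - 1) \le \frac{a^+}{a^-}(e^{\zeta^+} - 1)\log\gamma \le 1,
\]
which is precisely condition (K2). Since (A0)--(A3) and (K1) are in force, Proposition \ref{prop4.1} applies and yields the global attractivity of the equilibrium $K$.

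There is no genuine obstacle here: the argument is a short a priori estimate on $K$ extracted from the equilibrium relation in (K1) together with the uniform lower bound on $a_j(t)$, followed by substitution into the hypothesis of Proposition \ref{prop4.1}. The practical value of Corollary \ref{cor3.2} relative to Proposition \ref{prop4.1} is that (K2*) involves only the coefficients $a_j, p_j, \delta$ and can therefore be verified without having to solve the implicit identity in (K1) for the exact value of $K$.
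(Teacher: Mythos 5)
Your proposal is correct and follows essentially the same route as the paper: both extract the bound $a^-K\le\log\gamma$ from the equilibrium identity in (K1) (you via $p(t)/\delta(t)\ge e^{a^-K}$ and the liminf, the paper via $1\le\gamma e^{-a^-K}$), and then observe that (K2*) forces (K2), so Proposition \ref{prop4.1} applies. Your intermediate bound $K\le(\log\alpha)/a^-$ is marginally sharper but plays no further role.
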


\begin{proof} From (K1), we derive $1\le \ga e^{-a^-K}$, thus ${a^-K}\le \log \ga$. In this way,
$a^+K(e^{\zeta^+} -1)\le   \frac{a^+}{a^-}(e^{\delta\zeta^+} -1)\log \ga$, thus (K2*) 
 implies (K2).
 \end{proof}

The result below is an immediate consequence of Proposition \ref{prop4.1}.

\begin{cor}\label{cor3.1} Consider \eqref{eq:nicholson} with $p>\de$  and under   hypotheses (H0)--(H2).
 In addition, assume
\begin{equation}
a^+K(e^{\delta\zeta^+} -1)\le 1
\tag{H3}
\end {equation}
where 
\begin{equation}\label{zeta_M_beta}
\zeta^+ =\max_{1\le j\le m} \limsup_{t \to \infty} \int_{t-\sigma_j(t)}^{t}  \beta(s)\, ds.
\end{equation}
Then, the positive equilibrium $K$   of \eqref{eq:nicholson} (defined by \eqref{equilK}) is globally attractive. In particular, this is the case if 
 \begin{equation*}\label{H3*}
 \frac{a^+}{a^-}(e^{\delta\zeta^+} -1)\log \frac{p}{\de} \le 1.%\tag{H3*}
 \end{equation*}
\end{cor}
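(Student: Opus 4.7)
The plan is to derive this corollary as a direct specialization of Proposition \ref{prop4.1} (together with Corollary \ref{cor3.2}) to the Nicholson equation \eqref{eq:nicholson}. The first step is to identify \eqref{eq:nicholson} as a particular instance of the general equation \eqref{general} by the choices $\de(t)=\be(t)\de$, $p_j(t)=\be(t)p_j$, $a_j(t)=a_j$, and $\tau_j(t),\sigma_j(t)$ the same. With this identification, $p(t)=\be(t)p$ where $p=\sum_j p_j$, and $a^+=\max_j a_j$, $a^-=\min_j a_j$.

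Next I would verify the hypotheses of Proposition \ref{prop4.1} one by one. Hypothesis (A0) is immediate from (H0). For (A1), one has $p(t)/\de(t)\equiv p/\de$, and $p>\de$ gives $\liminf p(t)/\de(t)=p/\de>1$ and $\limsup p(t)/\de(t)=p/\de<\infty$. For (A2), $\int_{t-\tau}^t p(s)\,ds=p\int_{t-\tau}^t\be(s)\,ds$, so (H2) yields the desired bound. For (A3), $\int_0^\infty\de(s)\,ds=\de\int_0^\infty\be(s)\,ds=\infty$ by (H1). Condition (K1) is exactly the defining identity \eqref{equilK} for $K$, because $\sum_j p_j(t)e^{-a_j(t)K}=\be(t)\sum_j p_j e^{-a_jK}=\be(t)\de=\de(t)$.

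The crucial bookkeeping step is translating (K2) of Proposition \ref{prop4.1} into (H3). Here one must carefully distinguish the $\zeta^+$ of the general framework (defined via $\de(s)$) from the $\zeta^+$ of the corollary statement (defined via $\be(s)$). Denoting the latter by $\zeta^+$ as in \eqref{zeta_M_beta}, the former becomes
\[
\max_{1\le j\le m}\limsup_{t\to\infty}\int_{t-\sigma_j(t)}^{t}\de(s)\,ds=\de\,\zeta^+,
\]
so condition (K2) reads $a^+K(e^{\de\zeta^+}-1)\le 1$, which is precisely (H3). Proposition \ref{prop4.1} then gives global attractivity of $K$.

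For the final sufficient condition, I would apply Corollary \ref{cor3.2} instead. Here $\ga=\limsup p(t)/\de(t)=p/\de$, and (K2*) becomes $(a^+/a^-)(e^{\de\zeta^+}-1)\log(p/\de)\le 1$, which is the stated inequality. The main obstacle is purely notational, namely keeping the two meanings of $\zeta^+$ straight; once that is done, the argument is entirely mechanical verification, and no further analytic work is required since the heavy lifting has already been carried out in Proposition \ref{prop4.1}, Corollary \ref{cor3.2} and Theorem \ref{thm2.1'} (which already guarantees permanence for this subclass).
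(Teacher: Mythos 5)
Your proposal is correct and follows exactly the route the paper intends: the paper gives no explicit proof, stating only that the corollary is an immediate consequence of Proposition \ref{prop4.1} (with the final sufficient condition coming from Corollary \ref{cor3.2}), and your verification of (A0)--(A3), (K1), and the translation of (K2) into (H3) via the rescaling of $\zeta^+$ by $\delta$ is precisely the mechanical specialization required.
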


%
%
%[ From previous proofs: For %Let $i_1,i_2\in \{1,\dots,m\}$ such that
%$$\min_{1\le i, j\le m}\tilde l_{ji}=:\tilde l,\q\max_{1\le i,j\le m}\tilde L_{ji}=:\tilde L,$$
%from the inequalities in  \eqref{eq:ineqlL2} we deduce that
%\begin{equation}\label{eq:ineqlL3}
%\begin{split}
%&L\le e^{-\delta\zeta_j}L_{\sigma_j}+L(1-e^{-\delta\zeta_j})f(\tilde l),\\
%% \le e^{-\zeta_j}K+\frac{p}{\delta}L e^{-a  \tilde l}(1-e^{-\de \zeta})\\
%&   l\ge e^{-\delta\zeta_j}l_{\sigma_j}+  l (1-e^{-\delta\zeta_j}) f(\tilde L), \qq j=1,\dots ,m,
%%\ge e^{-\de \zeta}K+\frac{p}{\delta}  l e^{-a \tilde L}(1-e^{-\de \zeta})
%\end{split}
%\end{equation}
%where, as before,
%$\dps f(x)=\frac{1}{\delta} \sum_{i=1}^mp_ie^{-a_i  x},\ x\ge 0$. ]
%

\begin{rmk}\label{rmk3.1}  {\rm In \cite{FariaPrates}, the global attractivity of $K$ as an equilibrium of \eqref{eq:nicholson}  was established assuming (H0) with $0<\be^-\le \be(t)\le \be^+<\infty$, (H3) and 
$$\frac{a^+}{a^-}<\frac{3}{2}.$$ This latter constraint was imposed in \cite{FariaPrates} to assure that $Sf(x)<0$ for the function $f$ defined  in \eqref{f0} and is now completely removed from Corollary \ref{cor3.1}. On the other end, 
clearly $0<\be^-\le \be(t)\le \be^+<\infty$  implies that (H1),~(H2) hold. Thus, even for the particular case \eqref{eq:nicholson}, Corollary \ref{cor3.1} is significantly sharper than  the result in  \cite{FariaPrates}.
 We also stress that El-Morshedy and Ruiz-Herrera's major result  for the autonomous equation  \eqref{Nich_ER} in \cite{ER19} is just a very particular case of Corollary \ref{cor3.1}, under the additional constraint $\tau\ge \sigma$.
 }\end{rmk}

 % For instance, with $K (e^{\de \zeta^+}-1)\frac{1}{\de}\sum_jp_ja_je^{-a_j K}\le 1$, instead of (H1), we derive that $|h'(K)|\le 1$, as in (ii). 

% on the global attractivity of solutions for  the general nonautonomous model \eqref{general}, without assuming the existence of a positive equilibrium.
% 
%
% 
% \begin{thm}\label{thm3.3}  Assume  (A0)--(A3) and let $\zeta^+$ be as in \eqref{zeta_M}
% 
%(a)   Fix any positive solution $x^*(t)$ with, for some $m^*,M^*>0$ and $T>0$,
% $$m^*\le x^*(t)\le M^*\q {\rm for} \q t\ge T-\tau.$$
% %$$\liminf_{t\to \infty} x^*(t)=m^*,\q \limsup_{t\to \infty} x^*(t)=M^*.$$
%If
% \begin{equation}\label{A4}
%a^+M^*\Big (\frac{M^*}{m^*}e^{\zeta^+} -1\Big )\le 1,
%\tag{A4}
%\end {equation}
%then the solution $x^*(t)$   of \eqref{general} is globally attractive.
%
%(b) If\begin{equation}
%\frac{a^+}{a^-}\log \ga \, e^{2(D+P)}\Big [\frac{a^+\log \ga}{a^-\log \al}e^{4D+3P+\zeta^+} -1\Big ]< 1,
%\tag{A5}
%\end {equation}
%where $\al,\ga, D,P$ are as in \eqref{alphagamma},
% then \eqref{general} is globally attractive; that is, 
% $$\lim_{t\to \infty} (x_1(t)-x_2(t))=0$$
% for any two positive solutions $x_1(t), x_2(t)$.
%\end{thm}
%

 \begin{rmk}\label{rmk3.2} 
 For  a recent result on the  global attractivity of a  Nicholson equation with a pair of constant discrete mixed delays given by $x'(t)=-\de(t)x(t)+p(t)x(t-\tau)e^{-ax(t-\sigma)}\ (a>0,0\le\sigma\le \tau)$, see   \cite[Theorem 4.1]{ElRuiz22}; for this particular case, our results are not exactly comparable, since $\de(t),\be(t)$  in \cite{ElRuiz22} were supposed to 
 be bounded below and above by positive constants.
  % In Theorem \ref{thm3.3}.(a), if $\liminf_{t\to \infty} x^*(t)=l^*, \limsup_{t\to \infty} x^*(t)=L^*$, from Theorem \ref{thm2.1} we have $0<l^*\le L^*<\infty$, and the result still holds with (A4)  replaced by $a^+L^*\Big (\frac{L^*}{l^*}e^{\zeta^+} -1\Big )< 1$.
 \end{rmk}

% \begin{rmk}\label{rmk3.3} Theorem \ref{thm3.3}.(a) strongly generalizes the criterion in \cite[Theorem 4.1]{ElRuiz22} for the global attractivity of any solution of $x'(t)=-\de(t)x(t)+p(t)x(t-\tau)e^{-ax(t-\sigma)}$, where $a>0, 0\le \sigma\le \tau$, $\de(t),\be(t)$ are continuous, bounded and bounded away from zero by positive constants, and $p(t)/\de(t)\ge \al>1$ for all $t$.
% \end{rmk}

By using a criterion for the existence of (at least) one positive periodic solution established  in \cite{Faria22}, an application of our main result to periodic equations is given below. 

%o be a global attractor. In this way, we provide an answer for some open problems in recent literature.

\begin{thm}\label{thm4.3} Consider \eqref{general} under the general conditions (A0) and $\de(t),p_j(t),a_j(t),\tau_j(t),\sigma_j(t)$ ($ j=1,\dots,m$)   $\omega$-periodic functions for some $\omega >0$. Suppose that $p(t):=\sum_{=1}^m p_j(t)>\de(t)$.  For $\zeta^+$ as in \eqref{zeta_M} and
$ \ga=\max_{t\in [0,\omega]}\frac{p(t)}{\de(t)}, 
D=\int_0^\omega \de(t)\, dt,$
assume also that
\begin{equation}\label{periodic1}p(t)\ge e^D\de(t)\q {\rm with}\q
  p(t)\not\equiv e^D\de(t),\q t\in [0,\omega],
  \end{equation}  and \begin{equation}\label{periodic2}\frac{a^+}{a^-}e^D\log \ga (e^{D+\zeta^+}-1)\le 1.\end{equation} 
Then, there exists a positive $\omega$-periodic solution of \eqref{general} which is a global attractor of all positive solutions. 
\end{thm}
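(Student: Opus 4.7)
The plan is to apply Theorem~\ref{thm3.3}(a) to a positive $\omega$-periodic solution $x^*(t)$, whose existence will be obtained from \cite{Faria22}. First I would verify that in the periodic setting hypotheses (A0)--(A3) are automatic: all coefficients are continuous and $\omega$-periodic, hence bounded (and $a_j(t)$ bounded below away from zero by continuity and positivity); $\int_0^\infty \delta(s)\,ds = \infty$ because $\delta$ is periodic and strictly positive; and \eqref{periodic1} together with $p(t)/\delta(t) \ge e^D > 1$ immediately yields $\liminf_{t\to\infty} p(t)/\delta(t) > 1$. Condition \eqref{periodic1} is precisely the hypothesis required by the existence criterion in \cite{Faria22}, which then produces at least one positive $\omega$-periodic solution $x^*(t)$ of \eqref{general}.

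Next I would extract explicit bounds for this $x^*$. Set $M^* := \max_{[0,\omega]} x^* = x^*(t_0)$ and $m^* := \min_{[0,\omega]} x^* = x^*(s_0)$; by $\omega$-periodicity I may choose $s_0 \in [t_0, t_0 + \omega]$. Using $(x^*)'(t) \ge -\delta(t)x^*(t)$ and integrating from $t_0$ to $s_0$ gives
\[
m^* = x^*(s_0) \ge M^* \exp\Big(-\int_{t_0}^{t_0+\omega} \delta(u)\,du\Big) = M^* e^{-D},
\]
so $M^*/m^* \le e^D$. Since $(x^*)'(t_0) = 0$, evaluating \eqref{general} at the maximum and bounding the nonlinear term above by $M^* p(t_0) e^{-a^- m^*}$ yields $a^- m^* \le \log(p(t_0)/\delta(t_0)) \le \log \gamma$, whence
\[
m^* \le \frac{\log \gamma}{a^-}, \qquad M^* \le \frac{e^D \log \gamma}{a^-}.
\]

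With these bounds in hand, the verification of condition (A4) for $x^*$ becomes a single line:
\[
a^+ M^* \Big(\frac{M^*}{m^*} e^{\zeta^+} - 1\Big) \le \frac{a^+}{a^-}\, e^D \log \gamma \, (e^{D+\zeta^+} - 1) \le 1,
\]
the last inequality being exactly \eqref{periodic2}. Theorem~\ref{thm3.3}(a) then delivers the global attractivity of $x^*(t)$ among all positive solutions of \eqref{general}; uniqueness of the positive $\omega$-periodic solution is a free corollary.

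I do not anticipate a genuine obstacle: the hypothesis \eqref{periodic2} has been calibrated precisely so that (A4) reduces to it once the bounds on $m^*, M^*$ are inserted. The one step requiring a moment's care is the derivation of $M^*/m^* \le e^D$, which truly exploits periodicity—specifically the ability to locate the minimum within a single period starting at the maximum—rather than just boundedness of $x^*$.
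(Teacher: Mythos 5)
Your proof is correct and follows the same overall architecture as the paper's: invoke the existence result from \cite{Faria22} under \eqref{periodic1}, establish the two bounds $M^*/m^*\le e^D$ and $M^*\le \frac{1}{a^-}e^D\log\ga$, check that \eqref{periodic2} then yields (A4), and conclude via Theorem \ref{thm3.3}(a). Where you differ is in how the two key estimates are obtained. The paper reads $M^*/m^*\le e^D$ directly off the cone $\mathcal{K}=\{y: y(t)\ge e^{-D}\|y\|_\infty\}$ in which \cite{Faria22} locates the periodic solution, whereas you rederive it from the differential inequality $(x^*)'\ge -\de(t)x^*$ integrated from the maximum to the minimum within one period; and the paper gets $e^{a^-m^*}\le\ga$ from the integral (variation-of-constants) representation of the periodic solution over a full period, whereas you get it by evaluating the equation at the interior maximum where $(x^*)'(t_0)=0$ and bounding the nonlinearity by $p(t_0)M^*e^{-a^-m^*}$. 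Your derivations are more elementary and self-contained (you need only the existence of \emph{some} positive periodic solution, not its membership in the cone), at the cost of not reusing the structural information that \cite{Faria22} already provides; both routes produce identical constants, so nothing is lost or gained quantitatively. Your closing remark that uniqueness of the positive periodic solution follows for free from global attractivity is also correct, though the paper does not state it.
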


\begin{proof} The assumptions on the coefficients and delays imply that (A1)--(A3) are satisfied. Let $C_\omega^+(\R)$ be the space of continuous and $\omega$-periodic functions $y:\R\to [0,\infty)$. From \cite[Theorem 4.3]{Faria22} (see also \cite{FO19}),  condition \eqref{periodic1} guarantees that equation \eqref{general}
  possesses at least one $\omega$-periodic positive solution $x^*(t)$ in the cone $\mathcal{K}=\{y\in C_\omega^+(\R): y(t)\ge e^{-D}\|y\|_\infty, t\in \R\}.$ 
  
   From the definition of $\mathcal{K}$, it follows that $m^*\le x^*(t)\le M^*$ for all $t$, with $M^*/m^*\le e^D$. Moreover, an $\omega$-period solution $x^*(t)$ satisfies
\begin{equation*}\begin{split}
  x^*(t)&= (e^D-1)^{-1}\int_t^{t+\omega} e^{\int_t^s \de(u)\, du} \sum_{j=1}^mp_j(s)x^*(s-\tau_j(s))e^{-a_j(s) x^*(s-\sigma_j(s))} ds\\
  &\le (e^D-1)^{-1}M^* e^{-a^-m^*}\ga \int_t^{t+\omega} \de(s)e^{\int_t^s \de(u)\, du}\, ds\\
  &= (e^D-1)^{-1}M^* e^{-a^-m^*}\ga (e^D-1)=M^* e^{-a^-m^*}\ga,
  \end{split}
  \end{equation*}
  thus $e^{a^-m^*}\le \ga$ and $M^*\le \frac{1}{a^-}e^{D}\log \ga$.   In this way,  \eqref{periodic2} implies that (A4) is satisfied. The result follows then  from Theorem \ref{thm3.3}.(a).
\end{proof}

\begin{rmk}\label{rmk4.3}
In fact, from \cite{Faria22},  an $\omega$-periodic equation \eqref{general}
  has an $\omega$-periodic positive solution  in the cone $\mathcal{K}$  defined above if either \eqref{periodic1} or $ \int_0^\omega p(t)\, dt\ge e^D(e^D-1)$ holds. \end{rmk}
  
%\section{Examples}

\med
  
  As  illustration of our results, several  examples are presented.
  
\begin{exmp}\label{ex1} {\rm  Consider an equation  \eqref{general} with  all the delays $\sigma_j(t)$ equal to zero:
 \begin{equation}\label{general_no_sigma}
    x'(t)=\sum_{j=1}^{m} p_j(t)  x(t-\tau_j(t)) e^ {-a_j(t) x(t)}-\delta(t) x(t),\qq t\ge 0,
\end{equation}
and assume (A0)--(A3).
For any fixed positive solution $x^*(t)$, we have
$\log \Big (\frac{x^*(t)}{x^*(t-\sigma_j(t))}\Big )=0$, thus $Z^+=\zeta^+=0$ for $\zeta^+$ in \eqref{zeta_M} and  $Z^+$ in  \eqref{Z+}. After the change of variables \eqref{y}, the transformed equation becomes
 \begin{equation}\label{general2_no_sigma}
    y'(t)=\sum_{j=1}^{m} p_j(t)  \left (y(t-\tau_j(t)) e^ {-a_j(t) x^*(t)y(t)}-e^ {-a_j(t) x^*(t)}y(t)\right),\qq t\ge 0.
\end{equation}
Eq. \eqref{general2_no_sigma} always satisfies (K1) and (K2)   with $K=1$. Consequently, Proposition \ref{prop4.1} leads to:

\begin{cor}\label{cor3.3}  For \eqref{general_no_sigma} under (A0)--(A3), any positive solution is a global attractor.\end{cor}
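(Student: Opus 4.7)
My plan is to mirror the proof of Theorem~\ref{thm3.3}(a): fix any positive solution $x^*(t)$ of \eqref{general_no_sigma} and apply the change of variables $y(t)=x(t)/x^*(t)$ from Lemma~\ref{lem3.1} to reduce the problem to Proposition~\ref{prop4.1} on the transformed equation. Since the corollary must apply to every positive solution $x^*(t)$ without any bound-type assumption (no analogue of (A4) is hypothesised), the key observation will be that the degenerate case $\sigma_j\equiv 0$ makes condition (K2) trivial.

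First, I would invoke Theorem~\ref{thm2.1}: (A0)--(A3) give uniform permanence of \eqref{general_no_sigma}, so the chosen $x^*(t)$ satisfies $0<m\le x^*(t)\le M<\infty$ for $t\ge T-\tau$, with $m,M$ the uniform bounds from \eqref{boundsmM}. By Lemma~\ref{lem3.1}, $y(t)$ satisfies an equation of the form \eqref{general2} with coefficients $P_j(t)=p_j(t)x^*(t-\tau_j(t))/x^*(t)$, $A_j(t)=a_j(t)x^*(t)$, and decay rate $D(t)=\sum_j P_j(t)e^{-A_j(t)}$, so $K=1$ is automatically an equilibrium and (K1) holds. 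I would then verify that (A0)--(A3) transfer: $A_j(t)\in[a^-m,a^+M]$ eventually gives (A0); the ratio $P(t)/D(t)$ is squeezed between $e^{a^-m}$ and $e^{a^+M}$, giving (A1); the inequality $P(t)\le (M/m)p(t)$ eventually gives (A2); and dividing the equation for $x^*$ by $x^*(t)$ yields $(\log x^*)'(t)=D(t)-\delta(t)$, so $\int_T^t D(s)\,ds-\int_T^t\delta(s)\,ds$ is bounded in $t$, transferring (A3).

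The decisive simplification is that $\sigma_j\equiv 0$ in \eqref{general_no_sigma} forces $\sigma_j\equiv 0$ in the transformed equation too, hence $\zeta^+=0$ in \eqref{zeta_M}. Therefore (K2) collapses to $A^+\cdot 1\cdot(e^0-1)=0\le 1$, which is automatic regardless of the sizes of $a_j$ or $x^*$. Proposition~\ref{prop4.1} then yields $y(t)\to 1$, and because $x^*(t)\le M$ eventually, $x(t)-x^*(t)=x^*(t)(y(t)-1)\to 0$, completing the argument. I do not anticipate a serious obstacle: the transfer of (A0)--(A3) is a routine consequence of permanence (which is precisely why this case is so clean), and the collapse $\zeta^+=0$ removes the only delicate hypothesis in Proposition~\ref{prop4.1}. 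The only minor care needed is that the bounds $m\le x^*(t)\le M$ are merely eventual, but since every ingredient above is an asymptotic statement, this is harmless.
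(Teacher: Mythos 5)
Your proposal is correct and follows essentially the same route as the paper: change variables via Lemma~\ref{lem3.1}, observe that $\sigma_j\equiv 0$ makes the transformed equation satisfy (K1) with $K=1$ and makes the quantity $Z^+$ in \eqref{Z+} vanish identically (both the $\log\big(x^*(t)/x^*(t-\sigma_j(t))\big)$ term and the integral term are zero), so (K2) holds trivially and Proposition~\ref{prop4.1} applies. Your verification that (A0)--(A3) transfer to the transformed equation via the permanence bounds is exactly the bookkeeping already carried out in the proof of Theorem~\ref{thm3.3}(a), so nothing further is needed.
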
}\end{exmp}

\begin{exmp}\label{ex2} {\rm Consider \eqref{eq:nicholson} with $m=2$,
$p_1=\frac{3}{5},p_2=\frac{1}{2}, a_1=\frac{1}{N}\, (N\in\N),a_2=1, \de =1$,   and $\be(t),\tau_j(t),\sigma_j(t)\, (j=1,2)$ satisfying the general conditions  set in (H0)-(H2). With the above notations,
 $p=1.1>\de$. Define $f(x)=\frac{3}{5}e^{-\frac{x}{N}}+\frac{1}{2}e^{-x}$, so that the positive equilibrium $K$ is the solution  of $\frac{3}{5}e^{-\frac{K}{N}}+\frac{1}{2}e^{-K}=1$. Observe that 
 $f(\frac{1}{4})<1$ if and only if $$e^{-\frac{1}{4N}}<\frac{5}{3}(1-\frac{1}{2}e^{-\frac{1}{4}})\approx 1.0177,$$
 which is true for all values of $N$. Thus, the positive equilibrium $K=K_N$ satisfies $K<\frac{1}{4}$.  In particular, (H3) is satisfied if
 ${\zeta^+} \le \log (23/3)\approx 2.0369$, in which case Corollary \ref{cor3.1} implies that $K$ is globally attractive for \eqref{eq:nicholson}, for all $N\in\N$. In contrast, for $N=2,3,\dots$ we have $a^+/a^->3/2$, and the result in \cite{FariaPrates} cannot be applied (conf. Remark \ref{rmk3.1}).
}\end{exmp}

%As an illustration,  the next example deals with a period equation of the form \eqref{general}.
%For a periodic version of \eqref{Nich_ER}, see the early work of Chen \cite{Chen}.
 
\begin{exmp}\label{ex3} {\rm Consider a 1-periodic Nicholson's equation with two pairs of ``mixed delays":
\begin{equation}\label{exmpPer}
y'(t)=-\de(t)y(t)+p_1(t)y(t-\tau_1(t))e^{-y(t-\sigma_1(t))}+p_2(t)y(t-\tau_2(t))e^{-y(t-\sigma_2(t))},
\end{equation}
where  $ \de(t)=\eta_0(1+\frac{1}{2}\cos(2\pi t)), p_1(t)=\eta_1\left(1+\frac{1}{2}\cos(2\pi t)\right),p_2(t)=\eta_2\left(1+\frac{1}{2}\sin(2\pi t)\right)$ for some  $\eta_0, \eta_1>0,\eta_2\ge 0$,
and the delay functions
$\tau_j(t), \sigma_j(t)\, (j=1,2)$ are 1-periodic. Set $p(t)=p_1(t)+p_2(t)$.
  For $\al,\ga$ as in \eqref{alphagamma}, 
\begin{equation}\label {alfabeta2}\al=\min_{\R}\frac{p(t)}{\de(t)}=\frac{2\eta_1+(2-\sqrt 2)^2\eta_2}{2\eta_0},\ \ga=\max_{\R}\frac{p(t)}{\de(t)}=\frac{2\eta_1+(2+\sqrt 2)^2\eta_2}{2\eta_0}.
\end{equation}

Note that $\int_0^1 \de(s)\, ds={\eta_0}, \int_0^1p(s)\, ds=\eta_1+\eta_2$. Let $C_1^+(\R)$ be the space of continuous and 1-periodic functions $y:\R\to [0,\infty)$. As shown above,  equation \eqref{exmpPer}
  possesses at least one 1-periodic positive solution $y^*(t)$ in the cone $\mathcal{K}=\{y\in C_1^+(\R): y(t)\ge e^{-\eta_0}\|y\|_\infty\}$  if 
  \begin{equation}\label{expl_periodic}p(t)\ge e^{\eta_0}\de(t) \q {\rm and}\q
  p(t)\not\equiv e^{\eta_0}\de(t)\end{equation} as in   \eqref{periodic1}.      
   
    If $\eta_2=0$ and $ \eta_1>\eta_0$, \eqref{exmpPer} has the form \eqref{eq:nicholson} and in fact the   1-periodic positive solution $x^*(t)$ is the positive equilibrium $K=\log(\eta_1/\eta_0)$. According to the above notation, $\zeta^+\le \frac{3}{2}\eta_0 \bar \sigma$, where $\bar \sigma=\max_{j=1,2}\max_{t\in [0,1]}\sigma_j(t)$.
   From Corollary \ref{cor3.1} $K$ is a global attractor if 
  \begin{equation}\label{exp2.2}\log(\frac{\eta_1}{\eta_0})\big (e^{\frac{3}{2}\eta_0 \bar \sigma}-1\big)\le 1.\end{equation}
 
Now, let  $\eta_2>0$
  and assume that  \eqref{expl_periodic} holds. For instance,  this is the case   with $\eta_2>0$ and $ \eta_1\ge \eta_0e^{\eta_0}$, or with $ \eta_1< \eta_0e^{\eta_0}$ and $\eta_2\ge (1+\sqrt 2)(\eta_0e^{\eta_0}-\eta_1)$. To deduce the latter case, note that with $b=\eta_0e^{\eta_0}-\eta_1$ and $\eta_2\ge b>0$ we have
\begin{equation*}
\begin{split}
p(t)-e^{\eta_0}\de(t)&=-b\Big (1+\frac{1}{2}\cos(2\pi t))\Big)+\eta_2\Big (1+\frac{1}{2} \sin(2\pi t)\Big)\\
&\ge  \sqrt 2 b+ \frac{1}{2}b \big(\sin (2\pi t) -\cos (2\pi t)\big)+\frac{\sqrt 2}{2}b \sin  (2\pi t)\\
&\ge _{\not\equiv}  \sqrt 2 b- \frac{\sqrt 2}{2}b-\frac{\sqrt 2}{2}b=0
%\frac{\eta_2}{2}(1+\sin(2\pi t))+\frac{3}{2}|b|+b+\frac{b}{2}\cos (2\pi t)\ge _{\not\equiv}0,
%&\ge \frac{\eta_2}{2}(1+\sin(2\pi t))+\frac{3}{2}|b|+b+\frac{b}{2}\cos (2\pi t)\ge _{\not\equiv}0,
\end{split}
\end{equation*}
thus   \eqref{expl_periodic} is satisfied.
From the above estimates,  if
 $$ e^{\eta_0} (e^{\eta_0(1+ \frac{3}{2} \bar \sigma)}-1) \log \Big(\frac{2\eta_1+(2+\sqrt 2)^2\eta_2}{2\eta_0}\Big )\le 1,$$
 Theorem \ref{thm4.3} implies that the periodic solution $x^*(t)$ is a global attractor. When $\eta_2=0$, note that  \eqref{exp2.2}  is less restrictive than the above condition, which is not surprising since $x^*(t)\equiv K$.

 Next, we do not assume  the existence of any 1-periodic solution. Choose e.g. the delay functions
  $\tau_1(t)=0.1(1+\cos (2\pi t)),\ \tau_2(t)=0.2,\ \sigma_1(t)=0.2,\  \sigma_2(t)=0.1(1+\sin (2\pi t)).$
  Clearly, $\tau=\bar\tau=\bar \sigma =0.2$.
  With  $\al,\ga$ in \eqref{alfabeta2} and $P,D$ as in \eqref{alphagamma}, we have
  $
  D\le 0.3\eta_0, P\le  0.3(\eta_1+\eta_2).$
 With $\al>1$, Theorem \ref{thm2.1} asserts that \eqref{exmpPer} is permanent with all  positive solutions $x(t)$  satisfying \eqref{permGen} for 
  $$m=\log (\al )e^{-0.3(2\eta_0+\eta_1+\eta_2)},
  \ M=\log (\ga) e^{0.6(\eta_0+\eta_1+\eta_2)}$$
 and $\al,\ga$ as in \eqref{alfabeta2}. From Theorem \ref{thm3.3}.(b), all solution of \eqref{exmpPer}  are  globally attractive. if
 \begin{equation}\label{exp2.3}\log \Big(\frac{\eta_1+3\eta_2}{\eta_0}\Big )e^{0.6(\eta_0+x\eta_1+\eta_2)}
\left[ \frac{\log \Big(\frac{2\eta_1+(2+\sqrt 2)^2\eta_2}{2\eta_0}\Big )}{\log \Big( \frac{2\eta_1+(2-\sqrt 2)^2\eta_2}{2\eta_0}\Big )}e^{0.3(4\eta_0+3\eta_1+3\eta_2)}-1\right]<1.
\end{equation}
 }
\end{exmp}

\section{Conclusions and discussion}

In this paper, we give sufficient conditions for the permanence  of \eqref{general},  providing explicit positive lower and upper uniform bounds   for all solutions with initial condition in  $C_0^+$.
Under the additional condition (A5) on the size of the delays $\sigma_j(t)$, expressed in terms of an upper bound for the coefficient $\zeta^+$ in \eqref{zeta_M}, we show that   all positive solutions of \eqref{general} 
 are {\it globally attractive} (in $C_0^+$),  in the sense that $x(t)-y(t)\to 0$ as $t\to\infty$  for any two solutions  $x(t), y(t)$ with initial conditions in $ C_0^+$. Sharper results are obtained when there exists a positive equilibrium $K$ of \eqref{general} or when a particular solution is fixed (see  (K2) or (A4), respectively). Not only \eqref{general}  is much more general than  \eqref{eq:nicholson}, but also when applied to  \eqref{eq:nicholson}  the present results on the global stability of $K$   constitute a  significant improvement of those in \cite{FariaPrates}: 
we impose that $\be(t)$  satisfies (H1),~(H2), rather than being bounded from above and below by positive constants, and  the constraint $\frac{\max_j a_j}{\min_j a_j}<3/2$ in \cite{FariaPrates} is completely removed.  Comparison with further recent literature is also presented. On the other hand,   criteria for the existence of at least one positive periodic solution have been provided for {\it periodic} models \eqref{general}, see e.g.  \cite{Faria22,FO19}.  As a significant application, here we show how to use our results to obtain the global attractivity of such a positive periodic solution, providing an answer to an open problem in  \cite{Faria22}.

%As a significant application, in the case of a periodic equation \eqref{general}  a criterion for the existence of a globally attractive positive periodic solution is given, providing an answer for an open problem raised  in \cite{Faria22} as well as in other references.

The method exploited here is based on the construction of an  associated DE $x_{n+1}=\mathfrak{h}(x_n)$ together with results concerning the existence of a globally attractive fixed point for DEs. A crucial point in this work was to construct a suitable $\mathfrak{h}$  (see \eqref{frakh}) which yields the desirable global attractivity under very mild constraints. 
%The properties of Schwarzian derivates play an important role in this context \cite{ML,LizTT, M-PN1,M-PN2}.

 Certainly, this methodology has the potential to be applied to other models with mixed monotonicities given by functions $h_j(t,u,v)$ with linear growth in a vicinity of zero (such Mackey-Glass equations), in the sense that $h_j(t,u,v)$ is nondecreasing in $u$, nonincreasing in $v$ and $h_j(t,0,0)=0, \frac{\p}{\p x}h_j(t,0,0)=\frac{\p}{\p y}h_j(t,0,0)=1$. 
This was shown in the theoretical approaches developed in \cite{ER19,ElRuiz22}, yet they only apply to some models under other strong restrictions. It is not apparent however  how to extend it to equations with a sublinear growth, such as   the model studied in  \cite{huang20}.
In fact,    a variant of \eqref{N} is  the so-called ``neoclassical growth model"
 $x'(t)=- \delta x(t)+ p x^\ga(t-\tau)e^{-a x(t-\tau)}$ with $\ga\in (0,1)$,
  often used  in economics, where    the delay  accounts for a   reaction  to market changes. 
  In \cite{huang20}, Huang et al.  considered the  growth model obtained by replacing in \eqref{eq:nicholson} $ x(t-\tau_j(t)) $ by $ x^\ga(t-\tau_j(t)) \ (1\le j\le m)$, with $\ga\in (0,1)$, and gave a criterion for the global attractivity of its   positive equilibrium  $K_\ga$.
  % by using the fluctuation lemma combined with a classical result of So and Yu \cite{SoYu}. 
   It is not clear whether  difference equations can be used to derive the global atractivity of such model. 
  %, or more general nonauto
%An interesting question is whether the present approach can be applied to a general nonautonomous neoclassical growth version of \eqref{general} with $ x^\ga(t-\tau_j(t)) $ instead of $ x(t-\tau_j(t)) $, for $0<\ga<1$.

In recent years, Nicholson-type systems have also been  intensively studied,
   in view of their applications as models for populations  structured in several  patches with migration of the populations among them, see e.g.  
   %  %   Significant progress has been made, addressing topics such as the extinction, permanence, existence of positive equilibria or periodic solutions,  stability of solutions, global attractivity of equilibria or periodic solutions
\cite{BIT,FariaRost,Liu14,Xu2}.
A next future   project is to study the $n$-dimensional analogs of \eqref{eq:nicholson} and \eqref{general}. In the context of Nicholson system with patch structure and mixed monotonicities,
%\begin{equation}
%\label{NSMixG}
%\begin{split}
%x_i'(t)=&-d_i(t)x_i(t)+\sum_{j\ne i} a_{ij}(t) x_j(t)+\sum_{k=1}^{m_i} b_{ik}(t)x_i(t-\tau_{ik}(t)) e^{-c_{ik}(t)x_i(t-\sigma_{ik}(t))}\, ds,\, i=1,\dots,n,
%\end{split}
%\end{equation}
%with the usual conditions on
% coefficients and delays. 
 Xu et al.  \cite{XuCaoGuo} gave already a criterion  for extinction of nonautonomous systems, whereas  El-Morshedy and Ruiz-Herrera  \cite{ElRuiz20} and Xu \cite{Xu2}  studied the global stability and attractivity of  autonomous models. Moreover, there are  some stability results in the literature for  periodic systems with a unique delay in each nonlinear term (see e.g.  \cite{Faria17,Faria21b,Liu14}  and references therein):  to extend them  to the broader scope  of mixed monotonicity Nicholson systems is also  a motivation for future research.

\end{document}